\documentclass[10pt,oneside,leqno]{amsart}
\usepackage{amsfonts}
\usepackage{graphics,graphicx}
\usepackage{caption} 
\usepackage{epsfig}
\usepackage[utf8]{inputenc}
\usepackage[T1]{fontenc}
\usepackage{color}
\usepackage{amssymb}
\usepackage{mathrsfs}
\usepackage{amsthm}
\usepackage{amsmath}
\usepackage{amssymb}
\usepackage{latexsym}
\usepackage{cancel}
\usepackage{dsfont}
\makeatletter
\@addtoreset{equation}{section} 
\makeatother
\newtheorem{lem}{Lemma}[section]

\newtheorem{cor}{Corollary}[section]
\newtheorem{thm}{Theorem}[section]
\newtheorem{rem}{Remark}[section]
\newfont{\sBlackboard}{msbm10 scaled 900}

\newcommand{\dd}     {{\rm d}}
\newcommand{\mylabel}[1]{\label{#1}
            \ifx\undefined\stillediting
            \else \fbox{$#1$}\fi }
\newcommand{\BE}{\begin{equation}}

\newcommand{\EEQ}{\end{equation}}
\newcommand{\rfb}[1]{\mbox{\rm
   (\ref{#1})}\ifx\undefined\stillediting\else:\fbox{$#1$}\fi}

\newfont{\Blackboard}{msbm10 scaled 1200}
\newcommand{\bl}[1]{\mbox{\Blackboard #1}}
\newfont{\roma}{cmr10 scaled 1200}

\def\CC{\rm \hbox{C\kern-.56em\raise.4ex
         \hbox{$\scriptscriptstyle |$}\kern+0.5 em }}
\newcommand{\ud}{\mathrm{d}}
\newcommand{\be}{\begin{equation}}
\newcommand{\ee}{\end{equation}}
\newcommand{\beq}{\begin{eqnarray}}
\newcommand{\eeq}{\end{eqnarray}}
\newcommand{\beqs}{\begin{eqnarray*}}
\newcommand{\eeqs}{\end{eqnarray*}}
\newcommand{\bt}{\begin{thm}}
\newcommand{\et}{\end{thm}}
\newcommand{\br}{\begin{remark}}
\newcommand{\er}{\end{remark}}
\newcommand{\bc}{\begin{cor}}
\newcommand{\ec}{\end{cor}}
\newcommand{\el}{\end{lem}}
\newcommand{\bd}{\begin{definition}}
\newcommand{\ed}{\end{definition}}

\def\b{\beta}

\def\o{\omega}
\def\G{\Gamma}

\def\O{\Omega}

\def\cH{{\cal H}}

\newcommand{\mm}    {{\hbox{\hskip 0.5pt}}}

\newcommand{\bluff} {{\hbox{\raise 15pt \hbox{\mm}}}}
\newcommand{\bb}   {{\hbox{\fourteeni b}}}

\makeatletter
\def\section{\@startsection {section}{1}{\z@}{-3.5ex plus -1ex minus
    -.2ex}{2.3ex plus .2ex}{\large\bf}}
\makeatother
\def\be{\begin{equation}}
\def\ee{\end{equation}}

\def\ds{\displaystyle}

\newcommand{\R}{\bl{R}}

\begin{document}

\thispagestyle{empty}
\title[Coupled plate equations with singular structural damping]{Regularity and stability of two coupled Euler-Bernoulli equations with a localized singular structural damping}
\author{Ka\"{\i}s AMMARI}
\address{LR Analysis and Control of Pde, LR 22ES03, Department of Mathematics, Faculty of Sciences of Monastir, University of Monastir, 5019 Monastir, Tunisia} 
\email{kais.ammari@fsm.rnu.tn} 

\author{Fathi HASSINE}
\address{LR Analysis and Control of PDEs, LR 22ES03, Department of Mathematics, Faculty of Sciences of Monastir, University of Monastir, 5019 Monastir, Tunisia
\and
Higher Institute of Applied Mathematics and Computer Science, University of Kairouan, 3100 Kairouan, Tunisia}
\email{fathi.hassine@fsm.rnu.tn}

\author{Louis Tebou}
\address{Department of  Mathematics  and Statistics, College of Arts and Sciences, Florida International University, Modesto Maidique Campus, 
Miami, FL 33199, USA}
\email{teboul@fiu.edu}


\begin{abstract}
This paper is concerned with the study of regularity and stability properties of two Euler–Bernoulli beam equations with localized singular damping. Under suitable regularity assumptions on the damping coefficient, we establish Gevrey regularity for the semigroup generated by the associated operator. Furthermore, for a broader class of damping mechanisms, including less regular damping, we derive uniform 
stability result. These findings provide a detailed description of the long-term behavior of the corresponding dynamical systems.
\end{abstract}

\subjclass{35A01, 35A02, 35L05, 35M33, 93D15}
\keywords{simultaneous stabilization, simultaneous regularity, velocity coupled plates, Singular structural damping}

\maketitle

\tableofcontents

\section{Introduction and statements of main results}
\setcounter{equation}{0}
In 1982, G. Chen and D.L. Russell \cite{cru} considered the following model of damped elastic system
 $$\ddot x+Ax+B\dot x=0,\quad t>0$$ on a Hilbert space $X$, where $A$ is a positive self-adjoint operator on its domain $D(A)$ and $D(A)$ is dense in $X$. The damping operator $B$ is also positive self-adjoint  and its domain $D(B)$ is dense in $X$. \\  
They proved that the underlying semigroup is analytic, therefore exponentially stable, when $B$ is proportional to either $A^{1/2}$ or $A$. Then, 
they  raised the following two conjectures: The semigroup is analytic if 

\begin{enumerate}
\item there exist constants $\rho_1>0$ and $\rho_2>0$: $$\rho_1 A^{\frac{1}{2}}\leq B\leq \rho_2 A^{\frac{1}{2}}$$
\item {  there exist constants $\rho_1>0$ and $\rho_2>0$: $$\rho_1^2 A\leq B^2\leq \rho_2^2 A.$$}
\end{enumerate}
In the same paper, they also proposed the following partial answers:
\begin{itemize}
\item   they proved the following result: The semigroup analytic if 
$$\forall \rho>0,\,\exists\varepsilon(\rho)>0:[2\rho-\varepsilon(\rho)]A^{\frac{1}{2}}\leq B\leq [2\rho+\varepsilon(\rho)]A^{\frac{1}{2}},$$
\item they also provided a partial answer to the 2nd conjecture provided extra conditions that are difficult to check were imposed.
\end{itemize}
Complete solutions to those conjectures were later provided by S. Chen and Triggiani:
\begin{itemize}
\item { In 1989, S. Chen and  Triggiani \cite{cht2},  under the conditions:
$$\text{ There exist }  \frac{1}{2}\leq \alpha\leq1\text{ and } 0<\rho_1<\rho_2<\infty:\rho_1 A^\alpha\leq B\leq \rho_2 A^\alpha$$ proved that the semigroup is analytic.}
 \\ In the same paper, they also proved that the semigroup fails to be analytic when $0<\alpha<1/2$, but is differentiable.
\item  In 1990,  they further investigated the case $0<\alpha<1/2$ in \cite{cht3}, and proved that the semigroup is of Gevrey class $s$ for every $s>\frac{1}{2\alpha};$
so the semigroup is infinitely differentiable for every $t>0$ and exponentially stable.\end{itemize}
It is worth mentioning that Huang \cite{hfa} as well as Taylor \cite{taylor} also contributed to the solutions of those conjectures.\\  Subsequently, in the particular case of a model of plate equation sandwiched between the Euler-Bernoulli plate model and the Kirchhoff plate model, the semigroup regularity and stability was discussed in \cite{terpl}, thereby extending the findings of those earlier works to this case. It is also worth mentioning the works, e.g. \cite{lre,lta,ltd,ktw,kls} where similar extensions were established in the framework of thermoelastic plates. \\ The problem to be tackled in this paper falls within the class of simultaneous control. Simultaneous control of a system of two or more equations involves controlling the system using the same control device for all the components of this system. This notion in the framework of the controllability of distributed systems was introduced by Russell in his study of the boundary controllability of Maxwell equations in rectangular domains \cite{rum}. To solve that problem, he transformed it into a controllability problem for a system of two uncoupled wave equations, one having the Dirichlet boundary conditions while the other one having the Neumann boundary conditions. That simultaneous controllability result was later generalized by Lions in the first volume of his monograph on controllability to a larger class of domains, and to uncoupled plate equations, \cite{lioc}. As for the simultaneous internal controllability of uncoupled wave equations (two or more equations) with different speeds of propagation, Haraux initiated that work and he established some unique continuation results leading to approximate controllability in all space dimensions, \cite{hao}. He also proved an exact controllability result in one space dimension where the control region was an arbitrary open subinterval of the interval under consideration, and he proved another one in higher space dimensions where the control region was the whole domain under consideration. The one-dimensional exact controllability result of haraux was  generalized to all space dimensions in \cite{tebb}, and under the Bardos-Lebeau-Rauch geometric control condition ``The open set $\omega$ is an admissible control support for exact controllability in time $ T$ if every ray of geometric optics enters $\omega$ in a time less than $T$”. \\  Recently  the following  abstract system with a simultaneous damping mechanism was considered in \cite{AST1}:
\begin{equation}\label{e1}\begin{array}{lll}
&y_{tt}+aAy+\gamma A^{\theta}(y_t+z_t)=0\text{ in }(0,\infty)\\
&z_{tt}+bAz+\gamma A^{\theta}(y_t+z_t)=0\text{ in }(0,\infty)\\
&y(0)=y^0\in V,\quad y_t(0)=y^1\in H,\quad z(0)=z^0\in V,\quad z_t(0) =z^1 \in H,\end{array}\end{equation} 
where $a,~b$, $\gamma$ are positive constants with $a\not= b$, and $\theta\in[-1,1]$; $H$ is a Hilbert space with inner product $(.,.)$ and norm $|.|$,  $A$  is a positive unbounded self-adjoint operator, with domain $D(A)$ dense in the Hilbert space $H$;
 $V=D(A^\frac{1}{2})$ with  $V\hookrightarrow H\hookrightarrow V'$, each injection being dense and compact, where $V'$ denotes the topological dual of $V$.\\  The authors of \cite{AST1} showed that the corresponding semigroup is 
 \begin{itemize}
 \item differentiable for all $0<\theta<1$, but is not analytic for $1/2<\theta\leq1$; in particular, the semigroup is exponentially stable for all $0\leq\theta\leq1$;
 \item of  Gevrey class $s$ for every  $s>1/2\theta$, for $0<\theta\leq1/4$, and of Gevrey class $s$ for every $s>(1+2\theta)/3\theta$, for $1/4<\theta\leq1/2$;
 \item polynomially stable for $-1\leq\theta<0$; more precisely the semigroup is $O(t^{\frac{1}{2\theta}})$ as $t$ goes to infinity, and this decay rate is optimal.
 \end{itemize}
 Later on and for the same system \eqref{e1}, in \cite{klt}, the authors improved the regularity of the semigroup as follows:
 \begin{itemize}
 \item the semigroup is analytic for $\theta=1/2$;
 \item the semigroup is  of  Gevrey class $s$ for every  $s>1/2\theta$, for $0<\theta<1/2$, and of Gevrey class $s$ for every $s>1/2(1-\theta)$, for $1/2<\theta<1.$ In particular, the analysis of the spectrum and \cite[Theorem 2, p.147 ]{taylor} show that those regularity results are optimal.
 \end{itemize}
 The type of system that we shall consider in this work is similar to \eqref{e1} in the sense that the system is also velocity coupled; however, now, the damping mechanism is no longer global, but rather localized in a proper subset of the domain under consideration. This makes the investigation of stability and regularity more challenging than in the case of global damping mechanisms. This justifies our interest in this problem.\\
 In the sequel, we shall use the following notation: Let $\Omega \subset \R^{n}$, $n \geq 2,$ be a bounded domain with a sufficiently smooth boundary $\Gamma=\partial \Omega$. 

Consider the damped plate system
\begin{equation}
\label{wave1}
\partial_t^2 u + d \, \Delta^2 u - \, \mathrm{div}(a(x) \, \nabla (\partial_t u + \partial_t v)) = 0\text{ in } \Omega \times (0,+\infty), 
\end{equation}
\begin{equation}
\label{wave1bis}
\partial_t^2 v +c \, \Delta^2 v - \, \mathrm{div}(a(x) \, \nabla (\partial_t u + \partial_t v)) = 0\text{ in } \Omega \times (0,+\infty), 
\end{equation}
\begin{equation}
\label{wave2}
u =\partial_\nu u= v =\partial_\nu v= 0\text{ on } \partial \Omega \times (0,+\infty),
\end{equation}
\begin{equation}
\label{wave3}
u(x,0) = u^0(x), \, v(x,0) = v^0(x), \partial_t u(x,0) = u^1 (x), \partial_t v(x,0) = v^1 (x)\text{ in } \Omega,
\end{equation}
where $c \neq d>0$ are constants and $a \in L^\infty (\Omega),$ is nonnegative in $\Omega$ and positive in $\omega$, where $\omega$ is an arbitrary nonempty open subset of $\Omega$.\\
The natural energy of the solution of \rfb{wave1}-\rfb{wave3} at time $t$ is given by
\begin{equation*}
E(u,v,t)=
\frac{1}{2} \int_\Omega\left( \left|\partial_t u(x,t)\right|^2 + \left|\partial_t v(x,t)\right|^2 + d \, \left|\Delta u(x,t)\right|^2 + 
c \, \left|\Delta v(x,t)\right|^2 \right) \, \ud x, \, \forall \, t \geq 0.
\end{equation*}
Simple formal calculations yield
\begin{equation*}
E(u,v,t)-E(u,v,s)= - \, \int_{s}^{t} \int_{\Omega} a(x) \, \left|\nabla(\partial_{t} u(x,s) + \partial_t v(x,s))\right|^2 \,\ud x\,\ud s,\forall t>s\geq 0, 
\end{equation*}
and therefore, the energy is a nonincreasing function of the time variable $t$.\\ The questions that we would like to address in this note are the following:
\begin{itemize}
    \item Does the underlying semigroup decay to zero as the time variable $t$ goes to infinity? If so, what is its decay rate? 
    \item What is the regularity of the semigroup? Given the abstract result of \cite{klt}, we know that this semigroup is analytic when the support $\o$ of the feedback control is the whole domain $\Omega$. Thanks to a recent semigroup regularity result of \cite{lasteb}, we know that for a single Euler-Bernoulli plate with localized structural damping, the semigroup is of Gevrey class $s$ for every $s>5/2$ provided the proper subset $\omega$ is big enough, (a simple example is the case where $\omega$ is a collar around the whole boundary of $\Omega$. It then makes sense to reformulate the regularity question as: Is the semigroup for \eqref{wave1bis}-\eqref{wave3} of a certain Gevrey class? If so, which one? 
\end{itemize}
 Before attempting to answer those questions, we want to draw the reader's attention to the importance of different speeds of propagation. Indeed, if $c=d$, then the system is unstable; to see this, assume $c=d$ and set $p=u+v$ and $q=u-v$, then the functions $p$ and $q$ satisfy

\begin{equation}
\label{wave11}
\partial_t^2 p + c \, \Delta^2 p - \, 2\mathrm{div}(a(x) \, \nabla (\partial_t p)) = 0\text{ in } \Omega \times (0,+\infty), 
\end{equation}
\begin{equation}
\label{wave11bis}
\partial_t^2 q + c \, \Delta^2 q = 0\text{ in } \Omega \times (0,+\infty), 
\end{equation}
\begin{equation}
\label{wave2a}
p =\partial_\nu p= q =\partial_\nu q= 0\text{ on } \partial \Omega \times (0,+\infty),
\end{equation}
\begin{align}\label{wave3a}
&p(x,0) = u^0(x)+v^0(x), \, q(x,0) =u^0(x)- v^0(x)\text{ in } \Omega,\notag\\& \partial_t p(x,0) = u^1 (x)+v^1(x), \partial_t q(x,0) = u^1(x)-v^1 (x)\text{ in } \Omega.
\end{align}
Notice that the $(p,q)-$system is uncoupled and the energy of the $q-$ system is conserved, while one can show that the energy of the $p-$system decays to zero. Thus, the $(u,v)-$ system is unstable since it will vibrate continuously, \cite{ahkht}.\\
Before stating our results, we start by introducing 
 the energy space by $\mathcal{H}=(H_{0}^{2}(\Omega))^2\times (L^{2}(\Omega))^2$ which is endowed with the usual inner product
$$
\left\langle(u_{1},u_2,u_3,u_4);(v_{1},v_{2},v_3,v_4)\right\rangle=  
$$
$$
d \, \int_{\Omega}\Delta u_{1}(x)\Delta \overline{v}_{1}(x)\,\dd x+
c \, \int_{\Omega}\Delta u_{2}(x)\Delta \overline{v}_{2}(x)\,\dd x+ \int_{\Omega}u_{3}(x)\overline{v}_{3}(x)\,\dd x + \int_{\Omega}u_{4}(x)\overline{v}_{4}(x)\,\dd x.
$$
Next, we shall define the linear unbounded operator $\mathcal{A}:\mathcal{D}(\mathcal{A})\subset\mathcal{H}\longrightarrow\mathcal{H}$ by
\begin{align*}
\mathcal{D}(\mathcal{A})=\{(u_1,u_2,v_1,v_2)\in\mathcal{H}: v_1,v_2 \in H_{0}^{2}(\Omega),\; d \Delta^2 u_1-\mathrm{div}(a\nabla (v_1+ v_2))\in L^{2}(\Omega), 
\\
c \Delta^2 u_2 -\mathrm{div}(a\nabla (v_1+ v_2)) \in L^{2}(\Omega)\}
\end{align*}
and
$$
\mathcal{A}(u_1,u_2,v_1,v_2)^{t} = (v_1, v_2, -d \, \Delta^2 u_1 + \mathrm{div}(a\nabla (v_1+ v_2)), 
-c \, \Delta^2 u_2 + \mathrm{div}(a \nabla (v_1+ v_2)))^{t}.
$$
 We also recall the following geometric condition that will be useful in the proofs of exponential stability and regularity results.\\
Introduce a geometric constraint (GC) on the subset
$\o$ where the dissipation is effective; we proceed as in 
	\cite{lu}, (see also \cite{kb, lioc} for a special case).\\

\let\cal=\mathcal\let\p=\partial \let\d=\delta
\noindent
{\bf (GC).} There exist open sets $\Omega_j\subset\O$ with piecewise
smooth boundary $\partial\Omega_j$, and points $x^j_0\in {\mathbb R}^{n}$,
$j=1,2,\dots, J$, such that $\Omega_i\cap\Omega_j=\emptyset$, for any $1\leq
i<j\leq J$, and:
$$\Omega\cap{\cal
N}_\d\left[\left(\bigcup_{j=1}^J\G_j\right)\bigcup\left(\Omega\setminus\bigcup_{j=1}
^J\Omega_j\right)\right]\subset\o,$$ for some $\d>0$, where
${\cal N}_\d(S)=\displaystyle\bigcup_{x\in S}\{y\in{\mathbb R}^n;|x-y|<\d\},\hbox{
for }S\subset{\mathbb
R}^n,$\\ $\Gamma_j=\left\{x\in\p\Omega_j;(x-x_0^j)\cdot\nu^j(x)>0\right\},$ $\nu^j$
being the unit normal vector pointing into the exterior of $\Omega_j$. 

\medskip

Let $0<\d_1<\d$. Set $Q_1={\cal N}_{\d_1}(S)$ and $\omega_1=\Omega\cap Q_1$.
 
Our main result is a regularity result for the corresponding semigroup, and it reads:
\begin{thm}\label{reg}
 Suppose now that $\o$ satisfies the geometric constraint {\bf (GC)}.  Further, assume that the function $a$ lies in $C^2(\bar\Omega)$, vanishes in $\Omega\setminus\omega$, and satisfies:
\begin{equation}\label{sa}\begin{array}{ll}
&\exists\, M>0:|\nabla a(x)|^4\leq M(a(x))^3 \text{ and } |D^2 a(x)|^2\leq Ma(x),\quad\forall\,x\in\Omega,\\~\cr& \text{there exists an open set }\omega_1\Subset\omega,~\exists\, a_0>0:a(x)\geq a_0,\quad\forall\, x\in\omega_1.\end{array}\end{equation} where $D^2a(x)$ denotes the Hessian matrix of $a$ at $x$.\\ Then, the semigroup $(S(t))_{t\geq0}$ is of Gevrey class $s$ for every $s>5/2$, as its resolvent satisfies the following estimate
\begin{align}\label{gevre}
\exists\, C_0>0:\,|\lambda|^{\frac{2}{5}}\left\Vert(i\lambda I-{\cal A})^{-1}\right\Vert_{{\cal L}({\cal H})}\leq C_0,\quad\forall\,\lambda\in{\mathbb R}\text{ with }|\lambda| \text{ large enough}.
\end{align}
\\
Therefore, the semigroup $(S(t))_{t\geq0}$ is infinitely differentiable at $t$ for all $t>0$.
\end{thm}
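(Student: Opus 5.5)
We prove the estimate \eqref{gevre} in the equivalent form
\[
\|U\|_{\mathcal H}\le C|\lambda|^{-2/5}\|F\|_{\mathcal H}
\qquad\text{whenever } (i\lambda-\mathcal A)U=F,\ |\lambda|\ \text{large}.
\]
Arguing by contradiction, we assume there are real $\lambda_n$ with $|\lambda_n|\to\infty$ and $U_n=(u_1^n,u_2^n,v_1^n,v_2^n)\in\mathcal D(\mathcal A)$ with $\|U_n\|_{\mathcal H}=1$ such that
\[
|\lambda_n|^{2/5}\,\|U_n\|_{\mathcal H}\ge n\,\|F_n\|_{\mathcal H},
\qquad F_n:=(i\lambda_n-\mathcal A)U_n .
\]
That is, $|\lambda_n|^{-2/5}F_n\to0$ in $\mathcal H$, and we must reach $\|U_n\|\to0$. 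The hypothesis allows $F_n$ to be as large as $o(|\lambda_n|^{2/5})$, so every estimate has to be bookkept with explicit powers of $\lambda$.

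Componentwise the resolvent equation reads $i\lambda u_j-v_j=f_j$ and
\[
i\lambda v_1+d\Delta^2u_1-\mathrm{div}(a\nabla w)=f_3,\qquad
i\lambda v_2+c\Delta^2u_2-\mathrm{div}(a\nabla w)=f_4,\qquad w=v_1+v_2 .
\]
Taking the real part of the inner product with $U$ gives
\[
\int_\Omega a|\nabla w|^2\,\dd x=\mathrm{Re}\langle F,U\rangle=o(|\lambda|^{2/5}).
\]

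\textbf{Step 1: regularity gain for $w$ in $\omega$.} We test the $w$-equation, i.e. the sum of the two plate equations, against weighted multipliers such as $a\,\bar w$ and $\mathrm{div}(a\nabla \bar u)$, with $u=u_1+u_2$. The commutator terms produced by differentiating $a$ are controlled by the singular hypotheses \eqref{sa}: $|\nabla a|^4\le Ma^3$ gives $|\nabla a|\le M^{1/4}a^{3/4}$, and $|D^2a|^2\le Ma$ gives $|D^2 a|\le M^{1/2}a^{1/2}$, so these terms are absorbed by $\int a|\nabla w|^2$ and fractional powers of $a$. This is the same mechanism as in the single-plate Gevrey result of \cite{lasteb}. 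The outcome should be bounds on $\lambda\int_{\omega_1}|\nabla u|^2$ and $\int_{\omega_1}|\Delta u|^2$ of size $o(1)$ after normalization. This is where the loss $|\lambda|^{2/5}$ appears, through an interpolation between the $L^2$, $H^1$ and $H^2$ levels of $u$ on $\omega_1$, using $a\ge a_0$ there.

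\textbf{Step 2: decoupling on $\omega_1$ (the main obstacle).} Smallness of $w$ on $\omega_1$ alone does not make $u_1$ and $u_2$ small there; the condition $c\neq d$ must be used. After eliminating $v_j=i\lambda u_j-f_j$, subtracting the two plate equations gives
\[
-\lambda^2(u_1-u_2)+\Delta^2(du_1-cu_2)=\text{(forcing terms)},
\]
in which the damping term has cancelled. On $\omega_1$ we also know $u_1\approx-u_2$ up to the Step 1 error, so $du_1-cu_2\approx(d+c)u_1$ and $u_1-u_2\approx 2u_1$. Hence, modulo small terms, $u_1$ satisfies on $\omega_1$ a single plate equation of the form $\lambda^2 u_1\approx \frac{c+d}{2}\Delta^2u_1$. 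On the other hand, each $u_j$ satisfies its own equation $\lambda^2u_j\approx d\Delta^2u_j$, respectively $c\Delta^2u_j$, up to $\mathrm{div}(a\nabla w)$. Since $\frac{c+d}{2}\neq d,c$, these relations are compatible only if the local energies are small. We make this quantitative with cutoff multipliers $\chi^2\bar u_j$ and $\chi^2\Delta\bar u_j$, where $\chi$ is supported in $\omega_1$. The delicate point is to track the powers of $\lambda$ in the cutoff commutators so that the total loss does not exceed $|\lambda|^{2/5}$; I expect this bookkeeping to be the hardest part of the argument.

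\textbf{Step 3: propagation by (GC).} For each plate separately we treat $\mathrm{div}(a\nabla w)$ as a source term and apply Liu's piecewise multipliers $(x-x_0^j)\cdot\nabla\bar u_i$, cut off to the sets $\Omega_j$ (\cite{lu}). Boundary and interface contributions are supported in $\mathcal N_\delta$ of $\Gamma_j$ and of $\Omega\setminus\bigcup_j\Omega_j$, hence inside $\omega$, where Steps 1–2 give smallness. The source term pairs as $\int a\nabla w\cdot\nabla(\cdot)$ and is controlled by $\big(\int a|\nabla w|^2\big)^{1/2}$ times $H^1$-norms, which are lower order. This yields $\|U_n\|_{\mathcal H}\to0$, contradicting $\|U_n\|=1$. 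The resolvent bound \eqref{gevre} then gives Gevrey class $s>5/2$ by Taylor's criterion \cite{taylor}, and in particular infinite differentiability of $S(t)$ for $t>0$.
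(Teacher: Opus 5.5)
The architecture is the paper's: local estimates on $\omega$, Liu's piecewise multipliers under (GC), and Taylor's criterion. There is a genuine gap, though. The decoupling step is missing, and the damping term in the propagation step is misjudged; these are the two places where the real work happens.

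\textbf{Decoupling on $\omega$.} Your Step 1 cannot make $\int_{\omega_1}|\Delta u|^2$ small, with $u=u_1+u_2$, by the single-plate mechanism of \cite{lasteb}. The summed equation is $i\lambda w+\frac{c+d}{2}\Delta^2u+\frac{d-c}{2}\Delta^2(u_1-u_2)-2\,\mathrm{div}(a\nabla w)=f_3+f_4$. Testing it with $\mathrm{div}(a\nabla\bar u)$ leaves the energy-size term $\frac{d-c}{2}\int_\Omega\Delta^2(u_1-u_2)\,\mathrm{div}(a\nabla\bar u)\,dx$ uncontrolled. Your Step 2 then presupposes exactly this local $H^2$-smallness of $u$, to replace $\Delta^2(du_1-cu_2)$ by $(c+d)\Delta^2u_1$. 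So Steps 1 and 2 are circular, and you leave the quantitative part of Step 2 open.

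The paper decouples at the level of velocities instead:
\begin{itemize}
\item It splits $v_j=v_j'+v_j''$ with $i\lambda v_j'-\Delta v_j'=f_{j+2}$, so that $\|v_j''\|_{-2}\lesssim|\lambda|^{-1}(\|U\|+|\lambda|^{-1}\|F\|)$.
\item Under \eqref{sa}, multiplication by $\sqrt a$ preserves $H^2_0(\Omega)$, and $\|\sqrt a\,w\|_1^2\lesssim\|U\|^2+\int_\Omega a|\nabla w|^2$. $H^{-2}$--$H^1$ interpolation then gives $|\sqrt a\,w|_2\lesssim|\lambda|^{-1/3}\bigl(\|U\|+\|U\|^{2/3}\|F\|^{1/3}+\cdots\bigr)$.
\item One writes $|\sqrt a\,v_1|_2^2+|\sqrt a\,v_2|_2^2=|\sqrt a\,w|_2^2-2\Re\int_\Omega a v_1\bar v_2\,dx$. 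The cross term comes from testing the $v_1$-equation with $c\,a\bar v_2$ and the conjugated $v_2$-equation with $d\,a v_1$. After substituting $v_j=i\lambda u_j-f_j$, the principal terms $\pm i\lambda cd\int_\Omega a\,\Delta u_1\Delta\bar u_2\,dx$ cancel. What remains is $(c-d)\Re\int_\Omega a v_1\bar v_2\,dx$ equal to $\lambda^{-1}$ times commutator, data and damping terms, which interpolation makes small.
\item This identity is where $c\neq d$ enters. Local potential energies then follow by equipartition with the multipliers $a\bar v_j$.
\end{itemize}

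\textbf{The damping term in Step 3 is not lower order.} Pairing $\mathrm{div}(a\nabla w)$ with $\varphi_j m^j\cdot\nabla\bar u_i$ produces $\int a\varphi_j\,D^2\bar u_i(\nabla w,m^j)\,dx$. This contains second derivatives of $u_i$, not $H^1$-norms, and is bounded only by $\bigl(\int a|\nabla w|^2\bigr)^{1/2}\bigl(\int a|D^2u_i|^2\bigr)^{1/2}$. In your normalization the first factor is $o(|\lambda|^{1/5})$ and may grow. The term is therefore $o(1)$ only if the weighted local Hessian energy is $O(|\lambda|^{-2/5})$; without a local decay rate at that level, the propagation step does not close.

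This is exactly where the paper's exponent is produced:
\begin{itemize}
\item Adding the two plates' multiplier identities pairs the damping with $\nabla(u_1+u_2)$.
\item An integration by parts turns $\int a|D^2(u_1+u_2)|^2$ into $\int a|\Delta(u_1+u_2)|^2$, plus terms controlled via $|\nabla a|^2\le Ca$ and $\lambda^2\int a|\nabla(u_1+u_2)|^2\lesssim\|F\|\|U\|+\|F\|^2$.
\item The local energy bound $\lesssim|\lambda|^{-2/3}\|U\|^{4/3}\|F\|^{2/3}+\cdots$ from the decoupling step then gives, for $\|U\|=1$, a contribution $\|F\|^{1/2}\cdot|\lambda|^{-1/3}\|F\|^{1/3}=|\lambda|^{-1/3}\|F\|^{5/6}$. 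This is $o(1)$ precisely under your hypothesis $\|F_n\|=o(|\lambda_n|^{2/5})$.
\end{itemize}
So the $|\lambda|^{2/5}$ loss does not come from an interpolation in Step 1, as you suggest. It comes from coupling the dissipation with the localized Hessian energy in the propagation step.
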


\begin{rem} 
This regularity result shows that the Gevrey class established in \cite{lasteb} for a single Euler-Bernoulli equation is the same for the singular system that we are dealing with. The proof of Theorem \ref{reg} is delicate; given that the matrix defining the simultaneous damping mechanism is singular, to guarantee the claimed regularity, one should be careful when estimating the localized energy, and also when propagating the damping effect from the damping region $\omega$ to the whole domain $\Omega$. \\ An example of a function $a$ satisfying the constraints \eqref{sa} was constructed in \cite[Remark 1.5]{lasteb}.  
\end{rem}
 
Now, we are going to state an exponential stability result for the semigroup. Notice that the resolvent estimate in Theorem \ref{reg} combined with the strong stability of the semigroup leads to the exponential stability of the semigroup, provided the damping coefficient is smooth enough and satisfies the constraints \eqref{sa}. The result we shall now state does not impose any regularity on the damping coefficient:

\begin{thm}\label{expstab}
 Suppose now that $\o$ satisfies the geometric constraint {\bf (GC)}.  Further, assume that the nonnegative function $a$ lies in $L^\infty(\Omega)$, and satisfies:
\begin{equation}\label{coerc}
\exists\, a_0>0:a(x)\geq a_0,\quad\text{ for a.e. } x\in\omega.\end{equation}  Then, the energy of System \eqref{wave1}-\eqref{wave3} decays exponentially, {\tt viz.}, there exist positive constants $C_0$ and $\mu$ such that for every $(u^0,u^1,v^0,v^1)$ in ${\cal H}$, one has the following inequality:
\begin{align}\label{enedec}
E(t)\leq C_0 e^{-\mu t}E(0),\quad\forall\, t\geq0.
\end{align}
\end{thm}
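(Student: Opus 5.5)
We use the frequency-domain characterization of exponential stability (Gearhart--Huang--Prüss). ${\cal A}$ is m-dissipative on ${\cal H}$, since
$$\mathrm{Re}\langle {\cal A}U,U\rangle=-\int_\Omega a|\nabla(v_1+v_2)|^2\,\dd x\le 0.$$
Its resolvent is compact because $(H^2_0)^2\times(L^2)^2$ embeds compactly in ${\cal H}$'s weaker topologies. It therefore suffices to prove two facts:
\begin{itemize}
\item[(i)] $i\mathbb R\subset\rho({\cal A})$;
\item[(ii)] $\sup_{\lambda\in\mathbb R}\|(i\lambda I-{\cal A})^{-1}\|_{{\cal L}({\cal H})}<\infty$.
\end{itemize}

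For (i) it is enough to exclude purely imaginary eigenvalues. If ${\cal A}U=i\lambda U$, dissipativity gives $\nabla(v_1+v_2)=0$ in $\omega$, so $v_1+v_2$ is constant on each component of $\omega$. Then $u_1+u_2$ is constant on each component, and in fact zero once we use the equations. The damping terms now vanish, and setting $w=u_1+u_2$ we get $\lambda^2u_1=d\Delta^2u_1$ and $\lambda^2u_2=c\Delta^2 u_2$ in $\Omega$, with $u_1=-u_2$ in $\omega$. Applying $\Delta^2$ gives $\lambda^2(d^{-1}-c^{-1})u_1=0$ in $\omega$. Since $c\neq d$ and $\lambda\neq0$ (the case $\lambda=0$ is trivial), $u_1=u_2=0$ in $\omega$. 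Unique continuation for $\Delta^2-\lambda^2/d$ (which factors into second-order operators) then gives $u_1=u_2=0$ in $\Omega$.

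For (ii) we argue by contradiction. Suppose there are $\lambda_n\to\infty$ and unit vectors $U_n$ with $(i\lambda_n-{\cal A})U_n=F_n\to0$. Dissipation first gives
$$\int_\Omega a|\nabla(v_1^n+v_2^n)|^2\,\dd x=o(1).$$
Using \eqref{coerc}, this yields $\nabla(v_1^n+v_2^n)\to0$ in $L^2(\omega)$, hence $\lambda_n\nabla(u_1^n+u_2^n)=o(1)$ in $L^2(\omega)$.

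The key step, and the main obstacle, is to decouple despite the singular damping matrix. Set $p=u_1+u_2$ and $q=du_1+cu_2$ in $\omega$. The resolvent equations give
$$-\lambda^2u_1+d\Delta^2u_1-\mathrm{div}(a\nabla(v_1+v_2))=f,$$
and similarly for $u_2$ with $c$. Subtracting the two equations eliminates the damping term: $\Delta^2(du_1-cu_2)-\lambda^2(u_1-u_2)=o(1)$. Combining this with the smallness of $p$, we localize with a cutoff $\chi$ supported in $\omega$. Multiplying the $u_1$ and $u_2$ equations by $\chi\bar u_2$ and $\chi\bar u_1$ and using $c\neq d$, the goal is
$$\lambda\|\chi^{1/2} u_j\|_{L^2}+\|\chi^{1/2}\Delta u_j\|_{L^2}=o(1)\qquad (j=1,2).$$
Here $a$ is only $L^\infty$, so the damping term must be handled only in its weak form $\int a\nabla(v_1+v_2)\cdot\nabla(\chi\bar u_j)$. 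This form is controlled by $\|\sqrt a\nabla(v_1+v_2)\|\,\|\nabla u_j\|_{L^2(\omega)}$, and $\|\nabla u_j\|=O(\lambda^{-1/2})$ by interpolation, so the term is $o(1)$. The lower-order commutator terms arising from $\chi$ are absorbed by interpolation between $\lambda u_j$ and $\Delta u_j$. We expect this localized energy estimate to be the delicate part, and we would try a second cutoff $\chi_1$ supported on $\{\chi=1\}$ to absorb the boundary-layer terms.

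Finally, we propagate from $\omega$ to $\Omega$ by multiplier methods under {\bf (GC)}, as in \cite{lu,tebb}. Outside $\omega$ the equations are the uncoupled conservative plate equations with $o(1)$ sources. On each $\Omega_j$ we use the multiplier $(x-x^j_0)\cdot\nabla\bar u_i$ together with $\bar u_i$, with a cutoff near $\Gamma_j$ and near $\Omega\setminus\bigcup\Omega_j$; these regions lie in $\omega$ by {\bf (GC)}. This gives
$$\|\lambda u_i\|^2+\|\Delta u_i\|^2\le C\left(\text{energy in }{\cal N}_\delta\text{-region}\right)+o(1)=o(1).$$
The boundary terms on $\partial\Omega$ have a good sign for the clamped conditions, or are absorbed as in the single-plate case. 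Hence $\|U_n\|\to0$, contradicting $\|U_n\|=1$. Theorem \ref{expstab} then follows with constants $C_0,\mu$.
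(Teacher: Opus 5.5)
\textbf{Same architecture, but the localized step is left open.} Your overall plan is the paper's: the Gearhart--Huang--Pr\"uss criterion; exclusion of imaginary eigenvalues via $c\neq d$ and unique continuation; a localized energy estimate in which the damping enters only through $\int_\Omega a\nabla(v_1+v_2)\cdot\nabla(\cdot)\,dx$; then Liu's piecewise multipliers $\varphi_jm^j\cdot\nabla\bar u_i$ plus the equipartition multiplier under {\bf (GC)}. Arguing by contradiction instead of proving $\|Z\|_{\mathcal H}\le K_0\|U\|_{\mathcal H}$ directly makes no difference.

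The gap is that you only state the localized estimate as a ``goal'', and the mechanism you describe does not produce it. Multiply the $u_1$-equation by $c\chi^2\bar u_2$ and the conjugated $u_2$-equation by $d\chi^2u_1$, then subtract. The principal terms $cd\int_\Omega\chi^2\Delta u_1\Delta\bar u_2\,dx$ cancel, and what remains is $(c-d)\lambda^2\int_\Omega\chi^2u_1\bar u_2\,dx=o(1)$, i.e. $\int_\Omega\chi^2v_1\bar v_2\,dx=o(1)$. This is exactly the paper's identity for $\Re\int_\Omega\eta^2v\bar z\,dx$. It controls only the cross term, not $\|\chi v_1\|_{L^2}$ and $\|\chi v_2\|_{L^2}$ separately.

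\textbf{What is needed to close it.} Write $\int_\Omega\chi^2(|v_1|^2+|v_2|^2)\,dx=\|\chi(v_1+v_2)\|_{L^2}^2-2\Re\int_\Omega\chi^2v_1\bar v_2\,dx$. You then need $\|\chi(v_1+v_2)\|_{L^2}\to0$, which you have not shown. Dissipation only gives $\nabla(v_1+v_2)=o(1)$ in $L^2(\omega)$, and ``the smallness of $p$'' is never upgraded to the $L^2$ level.
\begin{itemize}
\item The paper obtains it from the interpolation $|\eta(v+z)|_2\le K_0\|\eta(v+z)\|_{-2}^{1/3}\|\eta(v+z)\|_1^{2/3}$, with $\|v\|_{-2}+\|z\|_{-2}=O(|\lambda|^{-1})$ read off from the equations.
\item In your contradiction setting it is simpler. $\chi\lambda(u_1+u_2)$ is bounded in $H^1_0(\Omega)$. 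Testing the equations against a fixed $\psi\in C_c^\infty(\Omega)$ shows $\lambda u_j\rightharpoonup0$ in $L^2(\Omega)$. Rellich then gives $\chi\lambda(u_1+u_2)\to0$ strongly.
\item The localized potential energy needs the diagonal multipliers $\chi^2\bar u_j$ (equipartition). Their commutator terms are $O(\lambda^{-1/2})$, since $\|u_j\|_{L^2}=O(\lambda^{-1})$ and $\|\nabla u_j\|_{L^2}=O(\lambda^{-1/2})$, so no second cutoff is needed.
\end{itemize}

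\textbf{Smaller points.}
\begin{itemize}
\item The subtracted equation for $du_1-cu_2$ and the function $q$ play no role. That right-hand side also contains $i\lambda$ times the first two components of $F_n$, which is only $o(\lambda)$ in $H^2_0$, not $o(1)$ in $L^2$.
\item In this theorem $a$ need not vanish off $\omega$, so the equations are not conservative there. This is harmless but should be said: the damping term in the multiplier identity is bounded by $\|a\|_{L^\infty}^{1/2}\|\sqrt a\nabla(v_1+v_2)\|_{L^2}\|u_i\|_{H^2}=o(1)$.
\item The resolvent is compact because $D(\mathcal A)\subset(H^3\cap H^2_0)^2\times(H^2_0)^2$, not for the reason you give.
\end{itemize}
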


\begin{rem}
The exponential decay of the energy in Theorem \ref{expstab} should be contrasted with the polynomial decay of the energy established in the case two wave equations similarly damped, even with a smooth damping coefficient, \cite{ahkht}. Indeed, in the case of the wave equations, the localized simultaneous damping mechanism is Kelvin-Voigt type, and the corresponding semigroup is not differentiable even when this damping mechanism is globally distributed in $\Omega$, \cite{klt}. This lack of regularity is at the root of the weaker decay of the energy in the case of wave equations. In the case at hand, the damping mechanism is structural or square root type, and it is known that the semigroup is analytic when the damping is globally distributed. It would be interesting to examine what happens for a system of two Euler-Bernoulli plates when the localized simultaneous structural damping is replaced by a localized simultaneous Kelvin-Voigt damping; in this regard, we refer the interested reader to the recent work \cite{lasteb2}, where the authors proved some regularity as well as stability results for the semigroup corresponding to a single Euler-Bernoulli plate with localized Kelvin-Voigt damping.
\end{rem}

The remainder of this paper is organized as follows. In Section \ref{wellposed}, we prove the well-posedness of the system \rfb{wave1}-\rfb{wave3} as well as its strong stability. In section \ref{greg}, we show the claimed Gevrey regularity. Section \ref{pexpstab} is devoted to the proof of Theorem \ref{expstab}. 

\section{Well-posedness and strong stability}\label{wellposed}
We can write \eqref{wave1}-\eqref{wave3} as the following Cauchy problem
$$
\left\{\begin{array}{l}
\ds\frac{d}{dt}(u(t),v(t),\partial_t u(t), \partial_t v (t))^{t}=\mathcal{A}(u(t),v(t),\partial_t u(t), \partial_t v (t))^{t},
\\
(u(0),v(0),\partial_t u(0), \partial_t v (0))=(u^{0},u^{1},v^0,v^1).
\end{array}\right.
$$
\begin{thm}
The operator $\mathcal{A}$ generates a $C_{0}$-semigroup of contractions on the energy space $\mathcal{H}$.
\end{thm}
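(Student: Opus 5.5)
We will apply the Lumer--Phillips theorem: it suffices to show that $\mathcal{A}$ is dissipative on $\mathcal{H}$ and that $I-\mathcal{A}$ (equivalently $\lambda I-\mathcal{A}$ for some $\lambda>0$) is onto $\mathcal{H}$. Since $\mathcal{H}$ is a Hilbert space, this also yields density of $\mathcal{D}(\mathcal{A})$, though density can be checked directly because $(C_c^\infty(\Omega))^4\subset\mathcal{D}(\mathcal{A})$ when $a\in L^\infty$. Indeed, for smooth compactly supported entries, $\mathrm{div}(a\nabla(v_1+v_2))\in H^{-1}(\Omega)$ only. So we instead argue density of $\mathcal{D}(\mathcal{A})$ through the Hilbert-space consequence of maximal dissipativity.

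\textbf{Dissipativity.} For $U=(u_1,u_2,v_1,v_2)\in\mathcal{D}(\mathcal{A})$ we compute
$$\mathrm{Re}\langle\mathcal{A}U,U\rangle=\mathrm{Re}\Big(d\int_\Omega\Delta v_1\Delta\bar u_1+c\int_\Omega\Delta v_2\Delta\bar u_2-d\langle\Delta^2u_1,v_1\rangle-c\langle\Delta^2u_2,v_2\rangle+\langle\mathrm{div}(a\nabla(v_1+v_2)),v_1+v_2\rangle\Big).$$
The pairings are taken in $H^{-2}\times H^2_0$. This is legitimate because $v_j\in H_0^2$, and for each $j$ the combination $d\Delta^2u_1-\mathrm{div}(a\nabla(v_1+v_2))$ lies in $L^2$. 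Integrating by parts with the clamped boundary conditions, the bi-Laplacian terms cancel the first two in real part. The last term equals $-\int_\Omega a|\nabla(v_1+v_2)|^2\,dx\le0$, since $a\ge0$.

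\textbf{Range condition.} Given $F=(f_1,f_2,f_3,f_4)\in\mathcal{H}$, we seek $U\in\mathcal{D}(\mathcal{A})$ with $U-\mathcal{A}U=F$. This gives $v_j=u_j-f_j$, and substitution yields the elliptic system
$$u_1+d\Delta^2u_1-\mathrm{div}(a\nabla(u_1+u_2))=f_1+f_3-\mathrm{div}(a\nabla(f_1+f_2)),$$
together with the analogous equation for $u_2$ with $c$ and $f_2+f_4$. Its variational formulation on $V=(H_0^2(\Omega))^2$ uses the bilinear form
$$B((u_1,u_2),(\phi_1,\phi_2))=\int_\Omega(u_1\bar\phi_1+u_2\bar\phi_2+d\Delta u_1\Delta\bar\phi_1+c\Delta u_2\Delta\bar\phi_2)+\int_\Omega a\nabla(u_1+u_2)\cdot\nabla\overline{(\phi_1+\phi_2)}.$$
This form is continuous because $a\in L^\infty$. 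It is coercive because the $a$-term is nonnegative and $\|\Delta\cdot\|_{L^2}$ is a norm on $H_0^2$. The right-hand side is a continuous antilinear functional on $V$, since $f_j\in H_0^2$ makes $a\nabla(f_1+f_2)\in L^2$.

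Lax--Milgram then gives a unique solution $(u_1,u_2)\in V$. Setting $v_j=u_j-f_j\in H_0^2$, the equations hold in $\mathcal{D}'(\Omega)$. Rearranging shows $d\Delta^2u_1-\mathrm{div}(a\nabla(v_1+v_2))=f_3-u_1+f_1\in L^2$, and similarly for the second component, so $U\in\mathcal{D}(\mathcal{A})$.

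The only delicate point is the careful definition of the domain. The individual terms $\Delta^2u_j$ and $\mathrm{div}(a\nabla(v_1+v_2))$ are merely in $H^{-2}$, so the integrations by parts in the dissipativity step must be justified via duality $H^{-2}$--$H_0^2$ rather than pointwise. Lumer--Phillips then concludes.
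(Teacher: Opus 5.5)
Your proof is correct, but it reaches the range condition by a different route than the paper.

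\textbf{The paper's route.} It solves $\mathcal{A}U=F$, i.e.\ at $\lambda=0$. There the velocities are simply $v_j=f_j$, so the damping term $\mathrm{div}(a\nabla(f_1+f_2))$ is a known $H^{-1}$ datum. The problem then decouples into two independent clamped biharmonic problems, each handled by Lax--Milgram. This gives $0\in\rho(\mathcal{A})$, and a contraction (Neumann series) argument yields $R(\lambda I-\mathcal{A})=\mathcal{H}$ for small $\lambda>0$.

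\textbf{Your route.} You solve $(I-\mathcal{A})U=F$ directly, which gives a genuinely coupled elliptic system. It is still within reach of Lax--Milgram because the damping acts through the same operator on the sum in both equations. The coupling term in your form is therefore the Hermitian nonnegative $\int_\Omega a\nabla(u_1+u_2)\cdot\nabla\overline{(\phi_1+\phi_2)}$, which is the same structure that makes $\mathcal{A}$ dissipative.

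\textbf{What each buys.} Your approach avoids the perturbation step. You also justify the dissipativity computation via the $H^{-2}$--$H^2_0$ duality, which the paper glosses over. The paper's approach is the simplest possible Lax--Milgram application and gives $0\in\rho(\mathcal{A})$ for free. The paper reuses that fact in the strong stability proof and for compactness of the resolvent.

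\textbf{One expository slip.} Your opening claim that $(C_c^\infty(\Omega))^4\subset\mathcal{D}(\mathcal{A})$ for $a\in L^\infty$ is false. As you note in the next sentence, $\mathrm{div}(a\nabla(v_1+v_2))$ is in general only in $H^{-1}(\Omega)$. Delete that sentence. The fallback you actually use is correct and is exactly what the paper invokes from Pazy: a dissipative operator with $R(I-\mathcal{A})=\mathcal{H}$ on a reflexive space is densely defined.
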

\begin{proof}
Firstly, it is easy to see that for all $(u_1,u_2,v_1,v_2)\in\mathcal{D}(\mathcal{A})$, we have
$$
\mathrm{Re}\left\langle\mathcal{A}(u_1,u_2,v_1,v_2);(u_1,u_2,v_1,v_2)\right\rangle=-\int_{\Omega}a(x) \, |\nabla (v_1(x) + v_2(x))|^{2}\,\dd x,
$$
which show that the operator $\mathcal{A}$ is dissipative.

Next, for any given $(f_1,f_2,g_1,g_2)\in\mathcal{H}$, we solve the equation $\mathcal{A}(u_1,u_2,v_1,v_2)=(f_1,f_2,g_1,g_2)$, which is recast on the following way
\begin{equation}\label{WPwave}
\left\{\begin{array}{l}
v_1=f_1,
\\
v_2 = f_2,
\\
- d \Delta^2 u_1+\mathrm{div}(a\nabla (f_1+f_2))=g_1,
\\
- c \Delta^2 u_2+\mathrm{div}(a\nabla (f_1 + f_2))=g_2.
\end{array}\right.
\end{equation}
It is well known that by Lax-Milgram's theorem the system \eqref{WPwave} admits a unique solution $u\in H_{0}^{2}(\Omega)$. Moreover by multiplying the second line of \eqref{WPwave} by $\overline{u}$ and integrating over $\Omega$ and using Poincar\'e inequality and Cauchy-Schwarz inequality we find that there exists a constant $C>0$ such that
$$
d\, \int_{\Omega}|\Delta u_1(x)|^{2}\,\dd x +c \, \int_{\Omega}|\Delta u_2(x)|^{2}\,\dd x $$
$$
\leq C\left(\int_{\Omega}|\nabla f_1(x)|^{2}\,\dd x+\int_{\Omega}|\nabla f_2(x)|^{2}\,\dd x +\int_{\Omega}|\nabla g_1(x)|^{2}\,\dd x + \int_{\Omega}|g_2(x)|^{2}\,\dd x\right).
$$
It follows that for all $(u,v)\in\mathcal{D}(\mathcal{A})$ we have
$$
\|(u_1,u_2,v_1,v_2)\|_{\mathcal{H}}\leq C\|(f_1,f_2,g_1,g_2)\|_{\mathcal{H}}.
$$
This implies that $0\in\rho(\mathcal{A})$ and by contraction principle, we easily get $R(\lambda\mathrm{I}-\mathcal{A})=\mathcal{H}$ for sufficient small $\lambda>0$. The density of the domain of $\mathcal{A}$ follows from \cite[Theorem 1.4.6]{Pazy}. Then thanks to Lumer-Phillips Theorem (see \cite[Theorem 1.4.3]{Pazy}), the operator $\mathcal{A}$ generates a $C_{0}$-semigroup of contractions on the Hilbert $\mathcal{H}$. 
\end{proof}
\begin{thm}
The semigroup $e^{t\mathcal{A}}$ is strongly stable in the energy space $\mathcal{H}$, i.e,
$$
\lim_{t\to+\infty}\|e^{t\mathcal{A}}(u^{0},v^{0},u^1,v^1)^{t}\|_{\mathcal{H}}=0,\;\forall\,(u^{0},v^{0},u^1,v^1)\in\mathcal{H}.
$$ 
\end{thm}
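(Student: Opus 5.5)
We prove strong stability via the Arendt--Batty--Lyubich--V\~u theorem. Since $\mathcal{A}$ generates a contraction semigroup and $0\in\rho(\mathcal{A})$, it is enough to show that $\sigma(\mathcal{A})\cap i\mathbb{R}$ is empty. The resolvent of $\mathcal{A}$ is compact: the proof of the generation theorem shows that $\mathcal{A}^{-1}$ maps $\mathcal{H}$ boundedly into $(H^2_0(\Omega))^2\times(H^1_0(\Omega))^2$, and the Rellich theorem then gives compactness. Hence the spectrum consists only of eigenvalues, and we must rule out eigenvalues $i\lambda$ with $\lambda\in\mathbb{R}$, $\lambda\neq0$.

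Suppose $\mathcal{A}U=i\lambda U$ with $U=(u_1,u_2,v_1,v_2)\neq 0$. The dissipation identity gives
\[
0=\mathrm{Re}\langle \mathcal{A}U,U\rangle=-\int_\Omega a|\nabla(v_1+v_2)|^2\,\dd x ,
\]
so $\nabla(v_1+v_2)=0$ on $\omega$, where $a>0$. Since $v_j=i\lambda u_j$, the function $w:=u_1+u_2$ is constant on each connected component of $\omega$. The damping term $a\nabla(v_1+v_2)$ vanishes identically in $\Omega$, so the eigen-equations decouple:
\[
d\Delta^2u_1=\lambda^2u_1,\qquad c\Delta^2u_2=\lambda^2u_2\quad\text{in }\Omega,
\]
with clamped boundary conditions. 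Applying $\Delta^2$ to $w$ on $\omega$ gives $\lambda^2(u_1/d+u_2/c)=\Delta^2 w=0$, so $u_2=-(c/d)u_1$ on $\omega$. Substituting into $w=\mathrm{const}$ gives $u_1(1-c/d)=\mathrm{const}$. Because $c\neq d$, $u_1$ is constant on each component of $\omega$, and therefore so is $u_2$. Then $\lambda^2u_1=d\Delta^2u_1=0$ on $\omega$, so $u_1\equiv0$ on $\omega$, and likewise $u_2\equiv 0$ on $\omega$. The equation $d\Delta^2u_1-\lambda^2u_1=0$ factors as $(\sqrt d\Delta-\lambda)(\sqrt d\Delta+\lambda)u_1=0$ and has analytic coefficients, so its solutions are real-analytic in the connected set $\Omega$. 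Unique continuation from the open set $\omega$ therefore yields $u_1\equiv0$ in $\Omega$, and similarly $u_2\equiv 0$. Then $v_j=i\lambda u_j=0$, so $U=0$, a contradiction.

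The only delicate point is the algebraic step that turns the single piece of information $\nabla(u_1+u_2)=0$ on $\omega$ into the vanishing of each component. This is exactly where the hypothesis $c\neq d$ enters; as noted in the introduction, the $q$-mode is undamped when $c=d$. The remaining ingredients are routine: compactness of the resolvent, analyticity of the solutions, and the application of the Arendt--Batty theorem.
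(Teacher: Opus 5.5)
Your argument is correct in substance and follows the paper's route. Both use Arendt--Batty with a compact resolvent, then the dissipation identity forcing $a\nabla(v_1+v_2)=0$ so that the eigen-equations decouple. Both then use $c\neq d$ by applying $\Delta^2$ on $\omega$ and comparing $\lambda^2u_1/d$ with $\lambda^2u_2/c$, and finish with unique continuation. The only difference is in how the constraint on $\omega$ is handled:
\begin{itemize}
\item the paper first passes to $\partial_{x_j}u_i$, so that the constraint on $\omega$ becomes $\partial_{x_j}u_1+\partial_{x_j}u_2=0$. It shows these derivatives vanish in $\Omega$ and then uses the clamped boundary conditions to kill the remaining constants;
\item you work with $u_i$ directly and exploit $\Delta^2(\text{const})=0$, which gives $u_i=0$ on $\omega$ without needing the boundary conditions.
\end{itemize}
This is a slightly cleaner version of the same idea.

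One justification has to be repaired: the compactness of the resolvent. Knowing that $\mathcal{A}^{-1}$ maps $\mathcal{H}$ boundedly into $(H^2_0(\Omega))^2\times(H^1_0(\Omega))^2$ does not give compactness in $\mathcal{H}$. The first two components are measured in the $H^2_0$ norm in $\mathcal{H}$, and the identity $H^2_0(\Omega)\to H^2_0(\Omega)$ is not compact, so Rellich only helps with the velocity components. You need compactness of $(f_1,f_2,g_1,g_2)\mapsto u_i$ into $H^2_0(\Omega)$. One way to get it:
\begin{itemize}
\item write $d\Delta^2u_1=\mathrm{div}(a\nabla(f_1+f_2))-g_1$;
\item note that $f\mapsto\mathrm{div}(a\nabla f)$ is bounded from $H^2_0(\Omega)$ into $H^{-1}(\Omega)$, and that $H^{-1}(\Omega)\hookrightarrow H^{-2}(\Omega)$ and $L^2(\Omega)\hookrightarrow H^{-2}(\Omega)$ are compact (adjoints of Rellich embeddings);
\item compose with the Lax--Milgram isomorphism $(\Delta^2)^{-1}:H^{-2}(\Omega)\to H^2_0(\Omega)$.
\end{itemize}
Alternatively, elliptic regularity on the smooth domain gives $u_i\in H^3(\Omega)\cap H^2_0(\Omega)$. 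Together with $v_i=f_i\in H^2_0(\Omega)\Subset L^2(\Omega)$, this yields compactness of $\mathcal{A}^{-1}$. The paper itself only asserts compactness without proof, so with this fix your argument is complete.
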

\begin{proof}
Since the resolvent of the operator $\mathcal{A}$ is compact, it follows from \cite{AB,FL} that, in order to establish the strong stability of the semigroup $(e^{t\mathcal{A}})_{t \geq 0}$, it suffices to show that
$
\sigma(\mathcal{A}) \cap i\mathbb{R} = \varnothing.
$
Moreover, since $0 \in \rho(\mathcal{A})$, it remains to prove that the operator $(i\mu I - \mathcal{A})$ is injective for all $\mu \in \mathbb{R}^*$.

\medskip

Let $(u_1,u_2,v_1,v_2)\in\mathcal{D}(\mathcal{A})$ such that 
\begin{equation}\label{Iwave}
\mathcal{A}(u_1,u_2,v_1,v_2)^{t}=i\mu(u_1,u_2,v_1,v_2)^{t}.
\end{equation}
Then taking the real part of the scalar product of \eqref{Iwave} with $(u,v)$ we get
$$
\mathrm{Re}(i\mu\|(u_1,u_2,v_1,v_2)\|_{\mathcal{H}}^{2})=\mathrm{Re}\left\langle\mathcal{A}(u_1,u_2,v_1,v_2),(u_1,u_2,v_1,v_2)\right\rangle=-\int_{\Omega} a |\nabla (v_1+v_2)|^{2}\dd x=0.
$$
which implies that
\begin{equation}\label{Dwave}
a \, \nabla (v_1 + v_2)=0 \qquad \text{ in }\,\Omega.
\end{equation}
Inserting \eqref{Dwave} into \eqref{Iwave}, we obtain
\begin{equation}\label{waveI1}
\left\{\begin{array}{ll}
\mu^{2}u_1 - d \Delta^2 u_1=0&\text{in }\Omega, 
\\
\mu^{2}u_2 - c \Delta^2 u_2=0&\text{in }\Omega,
\\
\nabla (u_1 + u_2) =0&\text{in }\omega
\\
u_1 = u_2 =0, \partial_\nu u_1 = \partial_\nu u_2 = 0&\text{on }\Gamma.
\end{array}\right.
\end{equation}

We denote by $w^i_{j}=\partial_{x_{j}}u_{i}, i=1,2$ and we derive the equations of \eqref{waveI1}, one gets
\begin{equation*}
\left\{\begin{array}{ll}
\mu^{2}w^1_{j} - d \Delta^2 w^1_{j}=0&\text{in }\Omega,\\
\mu^{2}w^2_{j} - c \Delta^2 w^2_{j}=0&\text{in }\Omega,
\\
w^1_{j} + w^2_j =0&\text{in }\omega.
\end{array}\right.
\end{equation*}
Which implies that
$$
d \Delta^2 w^1_{j} + c \Delta^2 w^2_{j} =0 \, \text{in }\omega  \Longrightarrow  (d - c) \Delta^2 w^i_{j} =0, \, \text{in }\omega, \, i=1,2 \Longrightarrow \Delta^2 w^i_{j} =0 \, \text{in }\omega, i=1,2.   
$$
According to the above system we have that  $w^i_{j} =0 \, \text{in }\omega, i=1,2$ and

\begin{equation*}
\left\{\begin{array}{ll}
\mu^{2}w^1_{j} - d \Delta^2 w^1_{j}=0&\text{in }\Omega,\\
\mu^{2}w^2_{j} - c \Delta^2 w^2_{j}=0&\text{in }\Omega,
\\
w^1_{j} = w^2_j =0&\text{in }\omega.
\end{array}\right.
\end{equation*}

Hence, from the unique continuation theorem we deduce that $w^i_{j}=0$ in $\Omega, i=1,2$ and therefore $u_1,u_2$ are constants in $\Omega$ and since ${u_i}_{|\Gamma} = {\partial_\nu u_i}_{|\Gamma} = 0, i=1,2$ we follow that $u_i\equiv 0, i=1,2$. We have thus proved that $\mathrm{Ker}(i\mu I-\mathcal{A})= \left\{0\right\}$. The proof is thus complete.
\end{proof}

 \let\bb=\mathbb \let\nag=\notag\let\nt=\noindent
\section{Proof of Theorem \ref{reg}}\label{greg}

In this section, we shall prove Theorem \ref{reg}. The main idea of the proof is to treat each equation separately and then suitably combine the estimates to derive the claimed Gevrey regularity.
For this proof, we will rely on Taylor's sufficient condition for Gevrey regularity, \cite[Theorem 4, p.155]{taylor}.
This proof will be completed as soon as we establish the following resolvent 
estimate \begin{equation}\label{res1}\|(i\lambda{\cal I}-{\cal A})^{-1}\|_{{\cal L}({\cal
H})}=O(|\lambda|^{-\frac{2}{5}}) \text{ as } |\lambda|\nearrow+\infty.\end{equation}
To this end, let $U\in {\cal H}$, and let $\lambda$ be a real number with $|\lambda|>1$.
Since the range of $i\lambda{\cal I}-{\cal A}$ is ${\cal H}$, there
exists $Z\in{ D}({\cal A})$ such that
\begin{equation}\label{xae0}i\lambda Z-{\cal A} Z=U.\end{equation} The desired resolvent estimate will be established once we prove
\begin{equation}\label{xae1}\|Z\|_{\cal H}\leq K_0|\lambda|^{-\frac{2}{5}}\,\|U\|_{\cal H},\end{equation}where here and in the sequel, $K_0$ is a generic positive constant that may eventually
depend on
$\O$, $\o$, and $a$, but never on $\lambda$.\\ \noindent To establish \eqref{xae1}, first, we note that
if $Z=(u,w,v,z)$ and $U=(f,h,g,\ell)$, then \eqref{xae0}\ may be recast as
\begin{align}\label{xae2}
&i\lambda u-v=f\nag\\&i\lambda v+d\Delta^2 u-\text{div}(a\nabla (v+z))=g\\
&i\lambda w-z=h\nag\\&i\lambda z+c\Delta^2 w-\text{div}(a\nabla (v+z))=\ell.\nag\end{align} First taking the inner product with $Z$
on both sides of \eqref{xae0}, then taking the real parts, we immediately
get
\begin{equation}\label{dislaw}|\sqrt{a}(\nabla v+\nabla z)|_2^2\leq \|U\|_{\cal H}\|Z\|_{\cal H}.\end{equation}
It follows from \eqref{dislaw}, the
first and third equations in \eqref{xae2}, as well as Poincar\'e inequality:
\begin{align}\label{dislaw1}\lambda^2|\sqrt{a}(\nabla u+\nabla w)|_2^2&\leq2|\sqrt{a}(\nabla v+\nabla z)|_2^2+2|\sqrt{a}(\nabla f+\nabla h)|_2^2\nag\\&\leq
2\|U\|_{\cal H}\|Z\|_{\cal H}+K_0\left(\|f\|_2^2+\|h\|_2^2\right)\leq
2\|U\|_{\cal H}\|Z\|_{\cal H}+K_0\|U\|_{\cal H}^2.\end{align}Henceforth, $|.|_2$ stands for $\|.\|_{L^2(\Omega)}$ or $\|.\|_{[L^2(\Omega)]^n}$, while $\|.\|_{s}$ stands for $\|.\|_{H^s(\Omega)}$ for every nonzero $s$ in ${\bb R}$.\\

\noindent
{\bf Methodology of the proof.} Due to the fact that the damping is localized, we will need first order or flow multipliers to propagate the effect of the damping mechanism from the damped region $\omega$ to the whole domain $\Omega$. We note that this damping propagation process is quite challenging for the following reasons: 
\begin{itemize}
\item the damping is unbounded and localized,
\item the matrix defining the damping is singular, making it quite tricky to combine the estimates established for $(u,v)$ and $(w,z)$-systems; this is why we need the flexural rigidity constants of the two plates to be distinct; 
\item our desire is to get the same Gevrey class for this singular system as for a single Euler-Bernoulli plate with localized structural damping established in \cite{lasteb}.
\end{itemize}
The rest of the proof will go along the following steps:\\
Step 1: Preliminary estimates. Here, we will focus on velocities with the ultimate goal of estimating the localized kinetic energy. The main idea is to express the localized kinetic energy using the sum of velocities, so that we can then invoke \eqref{dislaw}; \\
Step 2: Estimating the localized energy. Once the localized kinetic energy is estimated, we just rely on the local equipartition of energy identity and on grouping the terms involving the damping  to estimate the localized potential energy in a suitable way, (the grouping enables the canceling of undesirable terms) then derive an estimate for the localized energy;\\
Step 3: Geometric propagation of "regularity". In this step, we shall use appropriate flow multipliers to propagate the localized "regularity" from $\omega$ to the whole domain $\Omega$;\\
Step 4: Final estimate and completion of the proof.\\
\let\var=\varphi  \let\a=\alpha  \let\O=\Omega   \let\o=\omega  \let\nag=\notag \let\eqre=\eqref

\noindent
{\bf STEP 1: Estimating the localized kinetic energy.} First, we shall derive several estimates for the velocities $v$ and $z$.  Then, we shall use those estimates to derive an estimate of the term $|\sqrt{a}(v+z)|_2$. Afterward, we shall estimate the localized crossed product $\displaystyle\Re\int_\Omega av\bar z\,dx$. Finally, combining those two estimates we will get the sought after estimate of the localized kinetic energy of the coupled system.\\
The first step in this process is to decompose each velocity into two component: a very smooth one and a less smooth one; this follows an idea introduced in the literature by Liu and Renardy \cite{lre} when investigating the analyticity of the semigroup associated with an Euler-Bernoulli thermoelastic plate. This idea has been very useful in proving the regularity of the semigroup in the following works, e.g. \cite{AST1,ktw,lasteb,terpl}). Set
$$v=v_1+v_2,\text{ with }i\lambda v_1-\Delta v_1=g,\text{ and }i\lambda v_2=-d\Delta^2u+\text{div}(a\nabla (v+z))-\Delta v_1.$$ Similarly, set
$$z=v_1+z_2,\text{ with }i\lambda z_1-\Delta z_1=\ell,\text{ and }i\lambda z_2=-c\Delta^2w+\text{div}(a\nabla (v+z))-\Delta z_1.$$
On the one hand, the energy method readily shows
\begin{align}\label{vel1}& |\lambda||v_1|_2+|\lambda|^{\frac{1}{2}}|\nabla v_1|_2\leq K_0||U||_{\cal H}\nag\\&|\lambda||z_1|_2+|\lambda|^{\frac{1}{2}}|\nabla z_1|_2\leq K_0||U||_{\cal H}.\end{align}
On the other hand, we have
\begin{align}\label{vel2}|\lambda|||v_2||_{-2}\leq K_0|\Delta u|_2+||\hbox{div}(a\nabla( v+z))||_{-2}+K_0|v_1|_2. \end{align}
Given that  the function $a$ lies in $C^2(\bar\Omega)$, the application of Hahn-Banach theorem shows that there exists some function $\var$ in $H^2_0(\O)$ with $||\hbox{div}(a\nabla v)||_{-2}=||\var||_{2}$ such that
$$||\hbox{div}(a\nabla (v+z))||_{-2}^2=-\int_\O a\nabla (v+z)\cdot\nabla \bar\var\,dx=\int_\O (v+z)\nabla a\cdot\nabla\bar\var\,dx+\int_\O (v+z)a\Delta\bar\var\,dx.$$ It then readily follows
$$||\hbox{div}(a\nabla (v+z))||_{-2}\leq K_0(|v|_2+|z|_2).$$
Therefore, combining the last inequality with \eqref{vel2} and using \eqref{vel1}, we derive
\begin{align}\label{vel3}|\lambda|||v_2||_{-2}\leq K_0(||Z||_{\cal H}+|\lambda|^{-1}||U||_{\cal H}).\end{align}
In a similar way, it follows
\begin{align}\label{vel3b}|\lambda|||z_2||_{-2}\leq K_0(||Z||_{\cal H}+|\lambda|^{-1}||U||_{\cal H}).\end{align}
By interpolation, we also derive, thanks to \eqref{vel3} and \eqref{vel3b}:
\begin{align}\label{velxa3}&||v_2||_{-1}\leq K_0||v_2||_{-2}^{\frac{1}{2}}(|v|_2+|v_1|_2)^{\frac{1}{2}}\leq K_0|\lambda|^{-{\frac{1}{2}}}(||Z||_{\cal H}+|\lambda|^{-1}||U||_{\cal H})\nag\\&||z_2||_{-1}\leq K_0||z_2||_{-2}^{\frac{1}{2}}(|z|_2+|z_1|_2)^{\frac{1}{2}}\leq K_0|\lambda|^{-{\frac{1}{2}}}(||Z||_{\cal H}+|\lambda|^{-1}||U||_{\cal H}).\end{align}
Since the damping term involves the sum of the velocities, to estimate the localized kinetic energy, we start by estimating $|\sqrt{a}(v+z)|_2$. Using our decomposition, we find
\begin{align}\label{ez8}
|\sqrt{a}(v+z)|_2&\leq  |\sqrt{a}(v_1+z_1)|_2+|\sqrt{a}(v_2+z_2)|_2\nag\\&\leq K_0|\lambda|^{-1}||U||_{\cal H}+K_0||\sqrt{a}(v_2+z_2)||_{-2}^{1\over3}(||\sqrt{a}(v+z)||_1+||\sqrt{a}(v_1+z_1)||_1)^{\frac{2}{3}}.
\end{align}
Thanks to Poincar\'e inequality and the structural constraint on the function $a$, we have
\begin{align}\label{ez9}
||\sqrt{a}(v+z)||_1^2&=\int_\O\left|\frac{\nabla a}{2\sqrt{a}}(v+z)+\sqrt{a}\nabla (v+z)\right|^2\,dx\nag\\&\leq K_0\left(|v|_2^2+|z|_2^2+\int_\O a|\nabla (v+z)|^2\,dx\right)\nag\\&\leq
K_0\left(||Z||_{\cal H}^2+||Z||_{\cal H}||U||_{\cal H}\right),\text{ by invoking \eqref{dislaw}}.
\end{align}
Similarly, one readily checks
\begin{align}\label{ey1}
||\sqrt{a}v_1||_1^2+||\sqrt{a}z_1||_1^2\leq K_0(|v_1|_2^2+|z_1|_2^2)+K_0\int_\O (|\nabla v_1|^2+|\nabla z_1|^2)\,dx\leq
K_0|\lambda|^{-1}||U||_{\cal H}^2.
\end{align}
The application of Hahn-Banach Theorem shows that there exists a function $\psi$ in $H_0^2(\O)$ such that $|| \psi||_2=||\sqrt{a}v_2||_{-2}$ and
\begin{align}\label{ey2}
||\sqrt{a}v_2||_{-2}^2=\int_\O \sqrt{a}v_2\bar\psi\,dx\leq ||v_2||_{-2}|| \sqrt{a}\psi||_2\leq K_0|\lambda|^{-1} \left(||Z||_{\cal H}+||\lambda|^{-1}||U||_{\cal H}\right)|| \sqrt{a}\psi||_2.
\end{align}
Notice that with $a$ in $C^2(\bar\O)$ and the structural constraint \eqref{sa} being enforced, the function $\sqrt{a}\psi$ is in $H_0^2(\O)$. The proof of this claim is elementary and may be found in the Appendix of \cite[see (45) in Appendix A]{lasteb}.\\
 By Rellich inequality, we have 
\begin{align}\label{ey3}
|| \sqrt{a}\psi||_2^2&=\int_\O|\Delta( \sqrt{a}\psi)|^2\,dx\nag\\&=\int_\O|\psi(2^{-1}a^{-{1\over2}}\Delta a-4^{-1}a^{-{3\over2}}|\nabla a|^2)+a^{-{1\over2}}(\nabla a\cdot\nabla\psi)+\sqrt{a}\Delta\psi|^2\,dx
\end{align}The application of  the Cauchy-Schwarz inequality, Rellich inequality and \eqref{sa} lead to
\begin{align}\label{ey4}
|| \sqrt{a}\psi||_2\leq K_0|\Delta\psi|_2\end{align}so that \eqre{ey2} becomes
\begin{align}\label{ey21}
||\sqrt{a}v_2||_{-2}\leq K_0|\lambda|^{-1} \left(||Z||_{\cal H}+||\lambda|^{-1}||U||_{\cal H}\right).
\end{align}
In a similar way, one shows
\begin{align}\label{ey21b}
||\sqrt{a}z_2||_{-2}\leq K_0|\lambda|^{-1} \left(||Z||_{\cal H}+||\lambda|^{-1}||U||_{\cal H}\right).
\end{align}
Gathering \eqref{ez8}, \eqre{ez9}, \eqre{ey1}, \eqref{ey21} and \eqre{ey21b}, we derive
\begin{align}\label{ez80}
|\sqrt{a}(v+z)|_2&\leq K_0|\lambda|^{-1}||U||_{\cal H}\notag\\&\hskip.4cm+K_0|\lambda|^{-{1\over3}} \left(||Z||_{\cal H}+||\lambda|^{-1}||U||_{\cal H}\right)^{1\over3}\left(||Z||_{\cal H}+||Z||_{\cal H}^{1\over2}||U||_{\cal H}^{1\over2}+|\lambda|^{-{1\over2}}||U||_{\cal H}\right)^{2\over3}.
\end{align}
The last inequality simplifies to
\begin{align}\label{ez81}
|\sqrt{a}(v+z)|_2\leq K_0|\lambda|^{-{1\over3}}\left(  ||Z||_{\cal H}+||Z||_{\cal H}^{2\over3} ||U||_{\cal H}^{1\over3}+|\lambda|^{-{1\over3}}||Z||_{\cal H}^{1\over3}||U||_{\cal H}^{2\over3}+|\lambda|^{-{2\over3}}||U||_{\cal H}\right).
\end{align}
At this stage, we draw the reader's attention to the identity
\begin{equation}\label{ez81a}|\sqrt{a}v|_2^2+|\sqrt{a}z|_2^2=|\sqrt{a}(v+z)|_2^2-2\Re\int_\Omega{a}v\bar z\,dx.\end{equation}
To complete this step, it remains to estimate the last integral term. To do so, we shall use Green's formula. Multiply the second equation in \eqref{xae2} by $c\bar z)$ and the conjugate of its last equation by $dv$, then apply Green's formula over $\Omega$ to get the two identities
\begin{align}
c\int_\Omega av\bar z\,dx&=-\frac{cd}{i\lambda}\int_\Omega \Delta u(a\Delta\bar z+2\nabla a\cdot \nabla \bar z+\bar z\Delta a)\,dx\nag\\&
\hskip.2in+\frac{1}{i\lambda}\int_\Omega cag\bar z\,dx-\frac{c}{i\lambda}\int_\Omega a\nabla(v+z)\cdot(a\nabla\bar z+\bar z\nabla a)\,dx\label{ads1}\\~\nag\\
-d\int_\Omega a\bar zv\,dx&=-\frac{cd}{i\lambda}\int_\Omega \Delta \bar w(a\Delta v +2\nabla a\cdot \nabla v+v\Delta a)\,dx\nag\\&
\hskip.2in+\frac{1}{i\lambda}\int_\Omega da\bar \ell u\,dx-\frac{d}{i\lambda}\int_\Omega a\nabla(\bar v+\bar z)\cdot(a\nabla v+v\nabla a)\,dx\label{ads2}
\end{align}
Adding \eqref{ads2} to \eqref{ads1}, using the first and third equations in \eqref{xae2}, and taking real parts, one derives
\begin{align}\label{ads3}
\Re\int_\Omega av\bar z\,dx&=\frac{1}{c-d}\Bigg[\Re\frac{1}{i\lambda}\int_\Omega a\Big[cd(\Delta u\Delta\bar h-\Delta \bar w\Delta f) +cg\bar z+d\ell v\Big]\,dx\nag\\&\hskip.2in
-\Re\frac{cd}{i\lambda}\int_\Omega\Delta\bar w(2\nabla a\cdot \nabla v+v\Delta a)+\Delta u(2\nabla a\cdot \nabla \bar z+\bar z\Delta a)\,dx\\&\hskip.2in- \Re\frac{1}{i\lambda}\int_\Omega a\Big[c\nabla(v+z)\cdot(a\nabla\bar z+\bar z\nabla a)+d\nabla(\bar v+\bar z)\cdot(a\nabla v+v\nabla a)\Big]\,dx.\nag
\Bigg]\end{align}
The application of Cauchy-Schwarz inequality leads to the following estimates
\begin{align}\label{ads4}
\left|\Re\frac{1}{i\lambda}\int_\Omega a\Big[cd(\Delta u\Delta\bar h-\Delta \bar w\Delta f) +cg\bar z+d\ell v\Big]\,dx\right|&\leq K_0|\lambda|^{-1}(|\Delta u|_2|\Delta h|_2+|\Delta w|_2|\Delta f|_2)\nag\\&\hskip.2in+K_0|\lambda|^{-1}(|g|_2|z|_2+|v|_2|\ell|_2)\nag\\&\leq K_0|\lambda|^{-1}||Z||_{\cH}||U||_{\cH}.\end{align}
and 
\begin{align}\label{ads5}
&\left|\Re\frac{cd}{i\lambda}\int_\Omega\Delta\bar w(2\nabla a\cdot \nabla v+v\Delta a)+\Delta u(2\nabla a\cdot \nabla \bar z+\bar z\Delta a)\,dx\right|
\nag\\&\leq K_0|\lambda|^{-1}(|\Delta w|_2(|\nabla v |_2+|v|_2)+|\Delta u|_2(|\nabla z |_2+|z|_2))\nag\\&\leq K_0|\lambda|^{-1}||Z||_{\cH}^2+K_0|\lambda|^{-1}||Z||_{\cH}(|v|_2^\frac{1}{2}|\Delta v|_2^\frac{1}{2}+|z|_2^\frac{1}{2}|\Delta z|_2^\frac{1}{2}),
\end{align}where in the last inequality, we have invoked the elementary interpolation inequality: $|\nabla q|_2\leq C|q|_2^\frac{1}{2}|\Delta q|_2^\frac{1}{2}$ for every $q$ in $H_0^2(\Omega)$.\\ Using the first and third equations in \eqref{xae2}, one readily derives
\begin{equation}\label{ads5a}
|\Delta v|_2^\frac{1}{2}+|\Delta z|_2^\frac{1}{2}\leq K_0(|\lambda|^\frac{1}{2}||Z||_{\cH}^\frac{1}{2}+||U||_{\cH}^\frac{1}{2}).\end{equation}
The combination of \eqref{ads5} and \eqref{ads5a} yields
\begin{align}\label{ads5b}
&\left|\Re\frac{cd}{i\lambda}\int_\Omega\Delta\bar w(2\nabla a\cdot \nabla v+v\Delta a)+\Delta u(2\nabla a\cdot \nabla \bar z+\bar z\Delta a)\,dx\right|
\nag\\&\leq K_0\left(|\lambda|^{-\frac{1}{2}}||Z||_{\cH}^2+|\lambda|^{-1}||Z||_{\cH}^{\frac{3}{2}}||U||^\frac{1}{2}\right).\end{align}
Similarly, one derives
\begin{align}\label{ads6}
&\left|\Re\frac{1}{i\lambda}\int_\Omega a\Big[c\nabla(v+z)\cdot(a\nabla\bar z+\bar z\nabla a)+d\nabla(\bar v+\bar z)\cdot(a\nabla v+v\nabla a)\Big]\,dx\right|\nag\\&
\leq K_0|\lambda|^{-1}||Z||_{\cH}^{\frac{1}{2}}||U||_{\cH}^\frac{1}{2}\left(||Z||_{\cH}+|\lambda|^\frac{1}{2}||Z||_{\cH}+||Z||_{\cH}^\frac{1}{2}||U||_{\cH}^\frac{1}{2}\right)\\&
\leq K_0\left(|\lambda|^{-\frac{1}{2}}||U||_{\cH}^{\frac{1}{2}}||Z||_{\cH}^\frac{3}{2}+|\lambda|^{-1}||Z||_{\cH}||U||_{\cH}\right).\nag\end{align}
Gathering \eqref{ads3}, \eqref{ads4}, \eqref{ads5b} and \eqref{ads6}, we find
\begin{align}\label{ads7}
&\left|\Re\int_\Omega av\bar z\,dx\right|\leq  K_0\left(|\lambda|^{-\frac{1}{2}}||Z||_{\cH}^2+|\lambda|^{-\frac{1}{2}}||U||_{\cH}^{\frac{1}{2}}||Z||_{\cH}^\frac{3}{2}+|\lambda|^{-1}||Z||_{\cH}||U||_{\cH}\right).\end{align}
Finally, the combination of \eqref{ez81}, \eqref{ez81a} and \eqref{ads7} leads to the following estimate for the localized kinetic energy
\begin{align}\label{ads8}
|\sqrt{a}v|_2^2+|\sqrt{a}z|_2^2&\leq K_0|\lambda|^{-{2\over3}}\left(  ||Z||_{\cal H}^{4\over3} ||U||_{\cal H}^{2\over3}+|\lambda|^{-{2\over3}}||Z||_{\cal H}^{2\over3}||U||_{\cal H}^{4\over3}+|\lambda|^{-{4\over3}}||U||_{\cal H}^2\right)\nag\\&\hskip.2in+K_0\left(|\lambda|^{-\frac{1}{2}}||Z||_{\cH}^2+|\lambda|^{-\frac{1}{2}}||U||_{\cH}^{\frac{1}{2}}||Z||_{\cH}^\frac{3}{2}+|\lambda|^{-1}||Z||_{\cH}||U||_{\cH}\right) .
\end{align}

\nt
The above estimate is "optimal" in the sense that there is no "loss" of analyticity; the latter is totally transferred from the "parabolic" components $v_1$ and $z_1$. 
It gives  perfect scaling, \cite{lasteb}: 
\begin{align}\label{step1}
 |a^{1/2} v|_2 +|a^{1/2} z|_2&\sim  ||  \lambda^{-1}U||_{\cH} + ||  |\lambda|^{-1/4}Z||_{\cH} + ||Z||^{2/3}_{\cH} ||\lambda^{-1} U||^{1/3}_{\cH}   + || \lambda^{-1} U||^{2/3}_{\cH} ||Z||^{1/3}_{\cH}\nag\\&\hskip.2in
 +||Z||^{3/4}_{\cH} ||\lambda^{-1} U||^{1/4}_{\cH}   + || \lambda^{-1} U||^{1/2}_{\cH} ||Z||^{1/2}_{\cH}.
 \end{align}
\vskip.2cm\noindent
{\bf STEP 2: Estimation of localized potential energy.} First, we are going to use the equipartition of localized energy to estimate the localized potential energy of the $(u,v)-$system. The estimate of the localized potential energy of the $(w,z)-$system is established similarly; so the details will be omitted. Then, we will combine the two estimates to derive an estimate of the localized potential energy of the coupled system. Finally, we shall gather the estimates of localized kinetic and potential energies to derive an estimate for the localized energy of the coupled system.\\
Multiply  the second equation in \eqref{xae2}  by $ a\bar v$, use Green's formula and take real parts to derive
\begin{align}\label{xae6}
d\int_\Omega a|\Delta u|^2\,dx&=\int_\Omega a |v|^2\,dx+\Re\frac{d}{i\lambda}\int_\Omega\Delta u(2\nabla a\cdot \nabla\bar v+\bar v\Delta a-a\Delta\bar f)\,dx\nag\\&\hskip.2in \Re\frac{1}{i\lambda}\int_\Omega a\nabla(v+z)\cdot(a\nabla\bar v+\bar v\nabla a)\,dx-\Re\frac{1}{i\lambda}\int_\Omega ag\bar v\,dx.
\end{align}
Proceeding similarly, we find
\begin{align}\label{xae6r}
c\int_\Omega a|\Delta w|^2\,dx&=\int_\Omega a |z|^2\,dx+\Re\frac{c}{i\lambda}\int_\Omega\Delta w(2\nabla a\cdot \nabla\bar z+\bar z\Delta a-a\Delta\bar h)\,dx\nag\\&\hskip.2in \Re\frac{1}{i\lambda}\int_\Omega a\nabla(v+z)\cdot(a\nabla\bar z+\bar z\nabla a)\,dx-\Re\frac{1}{i\lambda}\int_\Omega a\ell\bar z\,dx.
\end{align}
Adding \eqref{xae6} to \eqref{xae6r}, we obtain
\begin{align}\label{xae6s}
\int_\Omega a(d|\Delta u|^2+c|\Delta w|^2)\,dx&=\int_\Omega a (|v|^2+|z|^2)\,dx+\Re\frac{d}{i\lambda}\int_\Omega\Delta u(2\nabla a\cdot \nabla\bar v+\bar v\Delta a-a\Delta\bar f)\,dx\nag\\&\hskip.2in \Re\frac{1}{i\lambda}\int_\Omega a\nabla(v+z)\cdot(\bar v\nabla a)\,dx-\Re\frac{1}{i\lambda}\int_\Omega ag\bar v\,dx\\&\hskip.4cm
+\Re\frac{c}{i\lambda}\int_\Omega\Delta w(2\nabla a\cdot \nabla\bar z+\bar z\Delta a-a\Delta\bar h)\,dx\nag\\&\hskip.2in \Re\frac{1}{i\lambda}\int_\Omega a\nabla(v+z)\cdot(\bar z\nabla a)\,dx-\Re\frac{1}{i\lambda}\int_\Omega a\ell\bar z\,dx,\nag
\end{align} where the imaginary term involving $|\nabla(v+z)|^2$ disappears.\\
 Now, we are going to estimate the crossed products coming from the $(u,v)$-system; similar estimates will hold for the corresponding terms coming from the $(w,z)$-system.\\
 Thanks to the Cauchy-Schwarz inequality and the dissipation law, one readily gets the following estimates
\begin{align}\label{ahte1}
\left|\Re\frac{1}{i\lambda}\int_\Omega d\Delta u(\bar v\Delta a-a\Delta\bar f)-g\bar v\,dx\right|&\leq K_0|\lambda|^{-1}(|\Delta u|_2(|v|_2+|\Delta f|_2)+|g|_2|v|_2)\nag\\&\leq K_0|\lambda|^{-1}(||Z||_{\cH}^2+||Z||_{\cH}||U||_{\cH})\\ \left|\Re\frac{1}{i\lambda}\int_\Omega a\nabla(v+z)\cdot(\bar v\nabla a)\,dx\right|&\leq K_0|\lambda|^{-1}||U||_{\cH}^{\frac{1}{2}}||Z||_{\cH}^\frac{3}{2}.\nag
\end{align}
Similarly, one derives, (by using the interpolation inequality: $|\nabla v|_2\leq K_0|v|_2^\frac{1}{2}|\Delta v|_2^\frac{1}{2}$ and the first equation in \eqref{xae2}):
\begin{align}\label{ahte2}
\left|\Re\frac{d}{i\lambda}\int_\Omega\Delta u(2\nabla a\cdot\nabla\bar v)\,dx\right|&\leq K_0|\lambda|^{-1}|\Delta u|_2|v|_2^{\frac{1}{2}}(|\lambda|^\frac{1}{2}|\Delta u|_2^\frac{1}{2}+|\Delta f|_2^\frac{1}{2})\nag\\&\leq K_0(|\lambda|^{-\frac{1}{2}}||Z||_{\cH}^2+|\lambda|^{-1}||U||_{\cH}^{\frac{1}{2}}||Z||_{\cH}^\frac{3}{2}),
\end{align}
as well as
\begin{align}\label{ahte1r}
\left|\Re\frac{1}{i\lambda}\int_\Omega c\Delta w(\bar z\Delta a-a\Delta\bar h)-\ell\bar z\,dx\right|&\leq K_0|\lambda|^{-1}(|\Delta w|_2(|z|_2+|\Delta h|_2)+|\ell|_2|z|_2)\nag\\&\leq K_0|\lambda|^{-1}(||Z||_{\cH}^2+||Z||_{\cH}||U||_{\cH})
\\ 
\left|\Re\frac{1}{i\lambda}\int_\Omega a\nabla(v+z)\cdot(\bar z\nabla a)\,dx\right|&\leq K_0|\lambda|^{-1}||U||_{\cH}^{\frac{1}{2}}||Z||_{\cH}^\frac{3}{2}.\nag
\end{align}
and
\begin{align}\label{ahte2r}
\left|\Re\frac{c}{i\lambda}\int_\Omega\Delta w(2\nabla a\cdot\nabla\bar z)\,dx\right|&\leq K_0|\lambda|^{-1}|\Delta w|_2|z|_2^{\frac{1}{2}}(|\lambda|^\frac{1}{2}|\Delta w|_2^\frac{1}{2}+|\Delta h|_2^\frac{1}{2})\nag\\&\leq K_0(|\lambda|^{-\frac{1}{2}}||Z||_{\cH}^2+|\lambda|^{-1}||U||_{\cH}^{\frac{1}{2}}||Z||_{\cH}^\frac{3}{2}),
\end{align}

The combination of \eqref{ads8} and \eqref{xae6s} to \eqref{ahte2r}, one finds
\begin{align}\label{ahte3}
&\int_\Omega a(|v|^2+d|\Delta u|^2)\,dx+\int_\Omega a(|z|^2+c|\Delta w|^2)\,dx\nag\\
&\leq K_0|\lambda|^{-{2\over3}}\left(  ||Z||_{\cal H}^{4\over3} ||U||_{\cal H}^{2\over3}+|\lambda|^{-{2\over3}}||Z||_{\cal H}^{2\over3}||U||_{\cal H}^{4\over3}+|\lambda|^{-{4\over3}}||U||_{\cal H}^2\right)\\&\hskip.2in+K_0\left(|\lambda|^{-\frac{1}{2}}||Z||_{\cH}^2+|\lambda|^{-1}||U||_{\cH}^{\frac{1}{2}}||Z||_{\cH}^\frac{3}{2}+|\lambda|^{-1}||Z||_{\cH}||U||_{\cH}\right) .\nag
\end{align}

Now that the localized energy has been estimated, we notice, as in the case of the localized kinetic energy that there is no loss in regularity; indeed, if the damping coefficient $a$ were a positive constant, the last two estimates would have led to the analyticity of the semigroup. In the remaining steps, we are going to propagate our "local regularity" to the whole domain. To do so requires the use of a first-order multiplier; this is where we are going to lose in regularity. By the way, thanks to \cite[Theorem 4.1]{liu-liu}, we know that localized structural or Kelvin-Voigt damping never leads to the analyticity of the semigroup; so, expecting a Gevrey-type regularity is the best regularity that one can hope for. 
\let\eqre=\eqref\let\o=\omega\let\d=\delta\let\a=\alpha\let\b=\beta
\vskip.2cm
\noindent
{\bf STEP 3: Geometric propagation of the damping effect.} In this portion of the proof, as indicated above, we will be using a first-order multiplier. To justify all computations that will follow, we find it useful to draw the reader's attention to the fact that we do have the requisite smoothness. Indeed, with the assumption on the domain, the damping coefficient, and the fact that $(u,w,v,z)$ lies in $D({\cal A})$, elliptic regularity shows that
$u,\,w\in H^4(\Omega)\cap H^2_0(\Omega)$.\\ 
Let $\a>0$ and $\b$ be real constants to be specified later. \\
Multiply  the conjugate of the second equation in \eqref{xae2}  by $ v$ and use its first equation as well as Green's formula to derive
\begin{align}\label{3e1}
-|v|_2^2+d|\Delta u|_2^2=\Re\frac{1}{i\lambda}\int_\Omega\{d\Delta\bar u\Delta f+\bar gv-a\nabla\overline{(v+z)}\cdot\nabla  v\}\,dx.
\end{align}
Similarly, one derives
\begin{align}\label{3e11}
-|z|_2^2+c|\Delta w|_2^2=\Re\frac{1}{i\lambda}\int_\Omega\{c\Delta\bar w\Delta h+\bar\ell z-a\nabla\overline{(v+z)}\cdot\nabla  z\}\,dx.
\end{align}
Adding those two equations and multiplying the resulting equation by $\b$, we find
\begin{align}\label{3e2}
&\beta\left(-(|v|_2^2+|z|_2^2)+d|\Delta u|_2^2+c|\Delta w|_2^2\right)\nag\\&=\beta\Re\frac{1}{i\lambda}\int_\Omega\{d\Delta\bar u\Delta f+\bar gv+c\Delta\bar w\Delta h+\bar\ell z\}\,dx,
\end{align}since the term involving $v+z$ disappears after the addition.\\
The application of the Cauchy-Schwarz inequality shows
\begin{align}\label{3e3}
&\left|\beta\Re\frac{1}{i\lambda}\int_\Omega\{d\Delta\bar u\Delta f+\bar gv+c\Delta\bar w\Delta h+\bar\ell z\}\,dx\right|\notag\\&\leq K_0|\lambda|^{-1}(|\Delta u|_2|\Delta f|_2+|g|_2|v|_2+|\Delta w|_2|\Delta h|_2+|\ell|_2|z|_2)\leq K_0|\lambda|^{-1}
||Z||_{\cH}||U||_{\cH}.
\end{align}
It then follows from \eqref{3e2} and \eqref{3e3}
\begin{align}\label{3e4}
&\beta\left(-(|v|_2^2+|z|_2^2)+d|\Delta u|_2^2+c|\Delta w|_2^2\right)\leq K_0|\lambda|^{-1}
||Z||_{\cH}||U||_{\cH}.
\end{align}
Now, we shall introduce some further notations that will be useful in the sequel. For each $j=1,\dots,J$,  where $J$ appears in the geometric constraint (GC) stated above, set $m^j(x)=x-x_0^j$. Let $0<\d_0<\d_1<\d$, where $\d$ is the one
given in (GC). Set
$$S=\displaystyle\left(\bigcup_{j=1}^J\G_j\right)\bigcup\left(\Omega\setminus\bigcup_{j=1}^J\Omega_j
\right),\quad
 Q_0={\cal N}_{\delta_0}(S),\quad Q_1={\cal N}_{\delta_1}(S),\quad\o_1=\O\cap Q_1,$$and
for each $j$, let
 $\varphi_j$ be a function  satisfying
 $$\varphi_j\in
W^{2,\infty}(\O),\quad 0\leq\varphi_j\leq1,\quad \var_j=1\hbox{ in }
\bar\Omega_j\setminus Q_1,\quad \var_j= 0\hbox{ in } \Omega\cap
Q_0.$$  

Now, multiply the second equation in \eqref{xae2} by $2\a \var_jm^j\cdot\nabla \bar u$, integrate over $\Omega_j$, take real parts and use the first equation in \eqref{xae2} to get
\begin{align}\label{3e4a}2\a\Re\int_{\Omega_j} g(\varphi_jm^j\cdot\nabla\bar u)\,dx&=2\a\Re\int_{\Omega_j} v\varphi_jm^j\cdot\nabla(-\bar v-\bar f)\,dx+2\a d\Re\int_{\Omega_j} \Delta^2u\varphi_jm^j\cdot\nabla\bar u\,dx\cr&
\hskip.4cm-2\a\Re\int_{\Omega_j}\hbox{div}(a\nabla( v+z))\cdot(\varphi_jm^j\cdot\nabla\bar u)\,dx.\end{align}
Proceeding similarly with the third and fourth equations in \eqref{xae2}, we obtain
\begin{align}\label{3e5}2\a\Re\int_{\Omega_j} \ell(\varphi_jm^j\cdot\nabla\bar w)\,dx&=2\a\Re\int_{\Omega_j} z\varphi_jm^j\cdot\nabla(-\bar z-\bar h)\,dx+2\a c\Re\int_{\Omega_j} \Delta^2w\varphi_jm^j\cdot\nabla\bar w\,dx\cr&
\hskip.4cm-2\a\Re\int_{\Omega_j}\hbox{div}(a\nabla( v+z))\cdot(\varphi_jm^j\cdot\nabla\bar w)\,dx.\end{align}
Now, we shall get some estimates for the $(u,v)$-system. Similar estimates hold for the $(w,z)$-system. Then we shall gather those estimates to get a first estimate of $||Z||_{\cH}$. \\ To this end, we are going to start with estimating the 
 term in the left hand side of \eqre{3e4}. Before proceeding with the actual estimate, we find it useful to rewrite that term by exploiting the decomposition of the velocity and the first equation in \eqre{xae2}, thereby getting
\begin{align}\label{ea1}2\a\Re\int_{\Omega_j} g(\varphi_jm^j\cdot\nabla\bar u)\,dx&=2\a\Re\int_{\Omega_j} (i\lambda v_1-\Delta v_1)(\varphi_jm^j\cdot\nabla\bar u)\,dx\notag\\&=2\a\Re\int_{\Omega_j} (- v_1\varphi_jm^j\cdot(\nabla\bar v+\nabla\bar f)-\Delta v_1(\varphi_jm^j\cdot\nabla\bar u)\,dx\\&=2\a\Re\int_{\Omega_j} (\bar v\text{div}(v_1\varphi_jm^j) -v_1\var_jm^j\cdot\nabla\bar f+\nabla v_1
\cdot\nabla(\varphi_jm^j\cdot\nabla\bar u)\,dx.\notag\end{align} Notice that  in the application of Green's formula, the boundary terms vanish. Indeed the boundary can be partioned into two parts so that on one part, $\var_j$ vanishes, and the other part is a subset of $\p\O$.\\ Before proceeding any further, we find it useful to recall the elementary identity: $|\Delta u|_2=|D^2u|_2$ for every $u$ in $H_0^2(\O)$, which, we shall implicitly use in the next estimate and later on.\\ Therefore, taking the sum over $j$ and applying the Cauchy-Schwarz and Poincar\'e  inequalities, yield, (keeping in mind that $|\lambda|>1$):
\begin{align}\label{ea2}2\a\Re\sum_{j=1}^J\int_{\Omega_j} g(\varphi_jm^j\cdot\nabla\bar u)\,dx&=2\a\Re\sum_{j=1}^J\int_{\Omega_j} (\bar v\text{div}(v_1\varphi_jm^j) -v_1\var_jm^j\cdot\nabla\bar f)\,dx\notag\\&\hskip.3cm+2\a\Re\sum_{j=1}^J\int_{\Omega_j}\nabla v_1
\cdot\nabla(\varphi_jm^j\cdot\nabla\bar u)\,dx\notag\\&\leq K_0(|v|_2+|\Delta u|_2)||v_1||_1+K_0|v_1|_2|\Delta f|_2\\&
\leq K_0\left(|\lambda|^{-{1\over2}}||Z||_{\cal H}||U||_{\cal H}+|\lambda|^{-1}||U||_{\cal H}^2\right)\nag\end{align}where in the last inequality, we use \eqre{vel1}.\\
 Next, we turn our attention to the terms in the right hand side of \eqre{3e4}. We are going to start with the term involving $f$, then we will examine the term involving $\Delta^2u$, afterwards, the term involving $\nabla \bar v$, and finally the term involving the damping.\\ To get a good estimate of the term involving $f$, first, we are going to use a duality argument between the functional space $H_0^1(\Omega)$ and its topological dual $H^{-1}(\Omega).$ To this end, notice that, (denoting the characteristic function of a set B by $1_B$):
 \begin{align}\label{velxb3}
-2\a\Re\int_{\Omega_j} v\varphi_jm^j\cdot\nabla\bar f\,dx&=-2\a\Re\int_{\Omega_j}v_1\varphi_jm^j\cdot\nabla\bar f\,dx-2\a\Re\langle v_2,(1_{\Omega_j}\varphi_jm^j\cdot\nabla\bar f)\rangle
\end{align}where $\langle.,.\rangle$ denotes the duality between $H^{-1}(\Omega)$ and $H_0^1(\Omega)$.\\ It then readily follows from Cauchy-Schwarz inequality, Poincar\'e type inequality and \eqref{vel1}:
 \begin{align}\label{velxb4}\left|2\a\Re\int_{\Omega_j}v_1\varphi_jm^j\cdot\nabla\bar f\,dx\right|\leq K_0|v_1|_2|\Delta f|_2\leq K_0|\lambda|^{-1}||U||_{\cal H}^2.\end{align}
and, thanks to \eqre{velxa3}: 
 \begin{align}\label{velxb5}\left|2\a\Re \langle v_2,(1_{\Omega_j}\varphi_jm^j\cdot\nabla\bar f)\rangle\right|&\leq ||v_2||_{-1}||1_{\Omega_j}\varphi_jm^j\cdot\nabla\bar f ||_1\nag\\&
\leq K_0|\lambda|^{-{1\over2}}(||Z||_{\cal H}+|\lambda|^{-1}||U||_{\cal H})|\Delta f|_2\\&
\leq K_0|\lambda|^{-{1\over2}}(||Z||_{\cal H}||U||_{\cal H}+|\lambda|^{-1}||U||_{\cal H}^2).\nag
\end{align}
Hence
 \begin{align}\label{velxb6}
\left|2\a\Re\sum_{j=1}^J\int_{\Omega_j}v\varphi_jm^j\cdot\nabla\bar f\,dx\right|\leq  K_0(|\lambda|^{-{1\over2}}||Z||_{\cal H}||U||_{\cal H}+|\lambda|^{-1}||U||_{\cal H}^2).
\end{align}
Applying Green's formula to the term involving $\Delta^2u$, we find
\begin{align}\label{ec}&2\a d\Re\int_{\Omega_j}\Delta^2 u(\varphi_jm^j\nabla\bar u)\,dx\notag\\&=-2\a d\Re\int_{\Omega_j}\{\nabla\Delta u \cdot(\nabla\varphi_j)m^j\cdot\nabla\bar u+\var_j\nabla\Delta u\cdot\nabla\bar u\}\,dx\\&\hskip.4cm -2\a d\Re\int_{\Omega_j}\var_jD^2\bar u(m^j,\nabla\Delta u)\,dx+2\a d\int_{\p\Omega_j}\varphi_j\p_{\nu^j}\Delta u m^j\cdot\nabla\bar u\,d\Gamma.\notag\end{align}
Applying Green's formula once more, we get
\begin{align}\label{ecy}&2\a d\Re\int_{\Omega_j}\Delta^2 u(\varphi_jm^j\nabla\bar u)\,dx\notag
\\&=2\a d\int_\O\{\Delta u\Delta\var_j m^j\cdot\nabla\bar u+\Delta u\nabla\var_j\cdot\nabla\bar u+\Delta uD^2\bar u(\nabla\var_j,m^j)\}\,dx\notag\\&\hskip.4cm -2\a d\Re\int_{\p\Omega_j}\Delta u \nu^j\cdot\nabla\varphi_jm^j\cdot\nabla\bar u\,d\Gamma
+2\a d\Re\int_{\Omega_j}\Delta u \nabla\varphi_j\cdot\nabla\bar u\,dx \\&\hskip.4cm+4\a d\int_{\Omega_j}\varphi_j|\Delta u|^2\,dx-2\a d\int_{\p\Omega_j}\varphi_j\Delta u\p_{\nu^j}\bar u\,d\Gamma\notag\\&\hskip.4cm+2\a d\Re\int_{\Omega_j}\Delta uD^2\bar u(\nabla\var_j,m^j)\,dx+\a d\int_{\Omega_j}\var_jm^j\cdot\nabla(|\Delta u|^2)\,dx\notag\\&\hskip.4cm-2\a d\Re\int_{\p\Omega_j}\varphi_j\Delta uD^2\bar u(m^j,\nu^j)\,d\Gamma+2\a d\int_{\p\Omega_j}\varphi_j\p_{\nu^j}\Delta u m^j\cdot\nabla\bar u\,d\Gamma,\notag\end{align} where $D^2u=(\p_k\p_\ell u;k,\,\ell=1,\,2,\,...,\,N)$, and for every $x$ in $\Omega$, the notation $D^2u(x)(p,q)$ stands for the sum $\displaystyle\sum_{k,\ell=_1}^N\p_k\p_\ell u(x)p_\ell q_k$ for all vectors $p,~q$ in ${\mathbb R}^N$.\\
Now, applying Green's formula one more time, we find
\begin{align}\label{ec0}\a d\Re\int_{\Omega_j}\var_jm^j\cdot\nabla(|\Delta u|^2)\,dx&=-N\a d\int_{\Omega_j}\var_j|\Delta u|^2\,dx-\a d\int_{\Omega_j}m^j\cdot\nabla\var_j|\Delta u|^2\,dx\notag\\&\hskip.4cm+\a d\int_{\p\Omega_j}\var_jm^j\cdot\nu^j|\Delta u|^2\,d\Gamma.\end{align}
Notice that if as in \cite{lu}, we set for each $j$, $S_j=\G_j\cup (\p\Omega_j\cap\O)$,
then one checks that $\var_j=0$ on $S_j$. Furthermore, we have
$\tilde S_j:=\p\Omega_j\setminus S_j\subset\G_j^c\cap\p\O$, ($A^c$ denotes the
complement of $A$); consequently, for each $j$, all boundary terms in \eqref{ecy} vanish, except for the boundary term involving $D^2\bar u$. However, that term also vanishes on $S_j$. Thus, with this observation in mind and \eqref{ec0}, the identity \eqref{ecy} becomes
\begin{align}\label{ec1}&2\a d\Re\int_{\Omega_j}\Delta^2 u(\varphi_jm^j\nabla\bar u)\,dx\notag\\&=2\a d\Re\int_\O\{\Delta u\Delta\var_j m^j\cdot\nabla\bar u+\Delta u\nabla\var_j\cdot\nabla\bar u\}\,dx\notag\\&\hskip.4cm +2\a d\Re\int_\O\Delta uD^2\bar u(m^j,\nabla\var_j)\,dx+2\a d\Re\int_{\Omega_j}\Delta u \nabla\varphi_j\cdot\nabla\bar u\,dx\notag\\&\hskip.4cm 
 -(N-4)\a d\int_{\Omega_j}(\varphi_j-1)|\Delta u|^2\,dx+2\a d\Re\int_{\Omega_j}\Delta uD^2\bar u(\nabla\var_j,m^j)\,dx\\&\hskip.4cm-(N-4)\a d\int_{\Omega_j}|\Delta u|^2\,dx-\a d\int_{\Omega_j}m^j\cdot\nabla\var_j|\Delta u|^2\,dx-\a d\int_{\tilde S_j}\varphi_j|\Delta u|^2m^j\cdot\nu^j\,d\Gamma,\notag\end{align} where the boundary integral is obtained by noticing that $\p_i\p_ju=\nu^i\nu^j\Delta u$ on $\p\O$. Also notice that
$$-\a d\int_{\tilde S_j}\varphi_j|\Delta u|^2m^j\cdot\nu^j\,d\Gamma\geq0$$ since $\tilde S_j\subset\Gamma_j^c$.\\ 
Taking the sum over $j$, then  applying the Cauchy-Schwarz inequality, we find
\begin{align}\label{ec2}&2\a d\Re\sum_{j=1}^J\int_{\Omega_j}\Delta^2 u(\varphi_jm^j\nabla\bar u)\,dx\\&\geq -K_0|\Delta u|_2\left(\int_{\o_1}|\nabla u|^2\,dx\right)^{1\over2}-K_0|\Delta u|_2\left(\int_{\o_1}|\Delta u|^2\,dx\right)^{1\over2}\notag\\&\hskip.4cm\displaystyle -K_0\int_{\o_1}|\Delta u|^2\,dx-(N-4)\alpha d|\Delta u|_2^2
\notag\\&\hskip.4cm+(N-4)\alpha d\int_{\O\setminus\bigcup_{j=1}^J\Omega_j}|\Delta u|^2\,dx,\notag\end{align}which simplifies to
\begin{align}\label{ecxa}&2\a d\Re\sum_{j=1}^J\int_{\Omega_j}\Delta^2 u(\varphi_jm^j\nabla\bar u)\,dx\notag\\&
\geq -K_0|\Delta u|_2\left(\int_{\o_1}|\nabla u|^2\,dx\right)^{1\over2}-K_0|\Delta u|_2\left(\int_{\o_1}|\Delta u|^2\,dx\right)^{1\over2}\\&\hskip.4cm\displaystyle -K_0\int_{\o_1}|\Delta u|^2\,dx-(N-4)\alpha d|\Delta u|_2^2,\notag
\end{align}where the last inequality is obtained by absorbing the last term in \eqref{ec2} into\\ $-K_0\int_{\o_1}|\Delta u|^2\,dx$ by recalling that $\O\setminus\bigcup_{j=1}^J\Omega_j\subset\o_1$.\\
The application of Green's formula shows
\begin{align}\label{eb}-2\a\Re\int_{\Omega_j} v\varphi_jm^j\cdot\nabla\bar v\,dx&=-\a\int_{\Omega_j} \varphi_jm^j\cdot\nabla( |v|^2)\,dx\nag\\&=\a N\int_{\Omega_j}\varphi_j|v|^2\,dx+\a\int_{\Omega_j}(m^j\cdot\nabla\varphi_j)|v|^2\,dx.\end{align}where the boundary integral vanishes as explained above.\\ Consequently, and thanks to the Cauchy-Schwarz inequality, we derive, after taking the sum over $j$:
\begin{align}\label{eb1}-2\a\Re\sum_{j=1}^J\int_{\Omega_j} v\varphi_jm^j\cdot\nabla\bar v\,dx&=\a N\sum_{j=1}^J\int_{\Omega_j}(\varphi_j-1)|v|^2\,dx+\a N\sum_{j=1}^J\int_{\Omega_j}|v|^2\,dx\notag\\&\hskip.4cm+\a\sum_{j=1}^J\int_{\Omega_j}(m^j\cdot\nabla\varphi_j)|v|^2\,dx\\&\geq -K_0\int_{\o_1}|v|^2\,dx+\a N|v|_2^2-\a N\int_{\O\setminus\bigcup_{j=1}^J\Omega_j}|v|^2\,dx\notag\\&\geq -K_0\int_{\O}a|v|^2\,dx+\a N|v|_2^2\notag\end{align}by taking into account that both $\var_j-1$ and $\nabla\var_j$ are nonzero in $\o_1$ only, and the damping coefficient  $d$ is bounded from below by a positive constant in $\o_1$.\\
For the remaining term in \eqre{3e4}, proceeding similarly, one derives, (notice that we introduce w here; this will ensure that our ultimate estimate involves $D^2(u+w)$ instead of $D^2u$ and $D^2w$ separately, which leads to a worse regularity estimate than the one desired):
\begin{align}\label{ez1}
&-2\a\Re\sum_{j=1}^J\int_{\Omega_j}\hbox{div}(a\nabla( v+z))(\varphi_jm^j\cdot\nabla(\bar u+\bar w))\,dx+2\a\Re\sum_{j=1}^J\int_{\Omega_j}\hbox{div}(a\nabla( v+z))(\varphi_jm^j\cdot\nabla\bar w)\,dx\nag\\&=2\a\Re\sum_{j=1}^J\int_{\Omega_j}a(\nabla (v+z)\cdot\nabla\varphi_j)(m^j\cdot\nabla(\bar u+\bar w))\,dx+2\a\Re\sum_{j=1}^J\int_{\Omega_j}a\var_j\nabla( v+z)\cdot\nabla(\bar u+\bar w))\,dx\nag\\&\hskip.4cm+2\a\Re\sum_{j=1}^J\int_{\Omega_j}a\var_j D^2(\bar u+\bar w)(\nabla( v+z),m^j)\,dx+2\a\Re\sum_{j=1}^J\int_{\Omega_j}\hbox{div}(a\nabla( v+z))(\varphi_jm^j\cdot\nabla\bar w)\,dx\nag\\&\geq -K_0\left(\int_{\O}a|\nabla( v+z)|^2\,dx\right)^{1\over2}\left(\int_{\O}a|\nabla( u+w)|^2\,dx\right)^{1\over2}+2\a\Re\sum_{j=1}^J\int_{\Omega_j}\hbox{div}(a\nabla( v+z))(\varphi_jm^j\cdot\nabla\bar w)\,dx\\&\hskip.4cm-K_0\left(\int_{\O}a|\nabla( v+z)|^2\,dx\right)^{1\over2}\left(\int_{\O}a|D^2(u+w)|^2\,dx\right)^{1\over2}.\nag\end{align}
Consequently, invoking \eqref{dislaw1}, we get
\begin{align}\label{ez1r}
&-2\a\Re\sum_{j=1}^J\int_{\Omega_j}\hbox{div}(a\nabla( v+z))(\varphi_jm^j\cdot\nabla(\bar u+\bar w))\,dx+2\a\Re\sum_{j=1}^J\int_{\Omega_j}\hbox{div}(a\nabla( v+z))(\varphi_jm^j\cdot\nabla\bar w)\,dx\nag\\&\geq
 -K_0|\lambda|^{-1}\left(||U||_{\cal H}^\frac{3}{2}||Z||_{\cal H}^\frac{1}{2}+||U||_{\cal H}||Z||_{\cal H}\right)-K_0||U||_{\cal H}^{1\over2}||Z||_{\cal H}^{1\over2}\left(\int_{\O}a|D^2(u+w)|^2\,dx\right)^{1\over2}\nag\\&\hskip.4cm+2\a\Re\sum_{j=1}^J\int_{\Omega_j}\hbox{div}(a\nabla( v+z))(\varphi_jm^j\cdot\nabla\bar w)\,dx.\end{align}
 Combining \eqref{3e4a}, \eqre{ea2}, \eqre{velxb6}, \eqre{ecxa}, \eqre{eb1} and \eqre{ez1r}, we get
\begin{align}\label{ez2}
\a N|v|_2^2-(N-4)\a d|\Delta u|_2^2&\leq K_0\left(|\lambda|^{-{1\over2}}||Z||_{\cal H}||U||_{\cal H}+|\lambda|^{-1}||U||_{\cal H}^2\right)+K_0\int_{\O}a|\Delta u|^2\,dx
\nag\\&\hskip.4cm+
K_0|\Delta u|_2|\nabla u|_2+ K_0|\Delta u|_2\left(\int_{\O}a|\Delta u|^2\,dx\right)^{1\over2}+K_0\int_{\O}a|v|^2\,dx\\&\hskip.4cm+K_0|\lambda|^{-1}||U||_{\cal H}^{3\over2}||Z||_{\cal H}^{1\over2}+
K_0||U||_{\cal H}^{1\over2}||Z||_{\cal H}^{1\over2}\left(\int_{\O}a|D^2(u+w)|^2\,dx\right)^{1\over2}\nag
\\&\hskip.4cm-2\a\Re\sum_{j=1}^J\int_{\Omega_j}\hbox{div}(a\nabla( v+z))(\varphi_jm^j\cdot\nabla\bar w)\,dx.\nag
\end{align}
Proceeding the same way for the $(w,z)$-system, estimating all terms except for the term involving the damping, we derive
\begin{align}\label{ez2b}
\a N|z|_2^2-(N-4)\a c|\Delta w|_2^2&\leq K_0\left(|\lambda|^{-{1\over2}}||Z||_{\cal H}||U||_{\cal H}+|\lambda|^{-1}||U||_{\cal H}^2\right)+K_0\int_{\O}a|\Delta w|^2\,dx
\nag\\&\hskip.4cm+
K_0|\Delta w|_2|\nabla w|_2+|\Delta w|_2\left(\int_{\O}a|\Delta w|^2\,dx\right)^{1\over2}+K_0\int_{\O}a|z|^2\,dx\\&\hskip.4cm+2\a\Re\sum_{j=1}^J\int_{\Omega_j}\hbox{div}(a\nabla( v+z))(\varphi_jm^j\cdot\nabla\bar w)\,dx\nag
\end{align}
Gathering \eqre{3e4},\eqref{ez2} and \eqre{ez2b}, then choosing the constants $\a$ and $\beta$ with \begin{align}\label{paramcond}\a(N-4)<\b<\a N\end{align} we find, (notice that the factors of both the kinetic and potential energies are positive thanks to this condition on the constants $\alpha$ and $\beta$):
\begin{align}\label{ez3}
&(\a N-\b)(|v|_2^2+|z|_2^2)+(\b-(N-4)\a) (d|\Delta u|_2^2+c|\Delta w|_2^2)\nag\\&\leq K_0\left(|\lambda|^{-{1\over2}}||Z||_{\cal H}||U||_{\cal H}+|\lambda|^{-1}||U||_{\cal H}^2\right)+K_0\int_{\O}a(|\Delta u|^2+|\Delta w|^2)\,dx
\nag\\&\hskip.4cm+
K_0|\Delta u|_2|\nabla u|_2+|\Delta u|_2\left(\int_{\O}a|\Delta u|^2\,dx\right)^{1\over2}+K_0\int_{\O}a(|v|^2+|z|^2)\,dx\\&\hskip.4cm+
K_0|\Delta w|_2|\nabla w|_2+|\Delta w|_2\left(\int_{\O}a|\Delta w|^2\,dx\right)^{1\over2}\nag\\&\hskip.4cm+K_0|\lambda|^{-1}||U||_{\cal H}^{3\over2}||Z||_{\cal H}^{1\over2}+
K_0||U||_{\cal H}^{1\over2}||Z||_{\cal H}^{1\over2}\left(\int_{\O}a|D^2(u+w)|^2\,dx\right)^{1\over2}\nag
\end{align}
At this stage, we recall the classical interpolation inequality as well as the first equation in \eqref{xae2} , it follows
\begin{align}\label{eza3}
|\nabla u|_2^2\leq K_0|u|_2|\Delta u|_2,
\end{align}from which one derives, thanks to the first equation in \eqref{xae2}
\begin{align}\label{ezb3}
|\nabla u|_2^2\leq K_0|\lambda|^{-1}(||Z||_\cH^2+||Z||_\cH||U||_\cH).\end{align}
Similarly, one derives
\begin{align}\label{ezb3b}
|\nabla w|_2^2\leq K_0|\lambda|^{-1}(||Z||_\cH^2+||Z||_\cH||U||_\cH).\end{align}
Using Young inequality and \eqre{ezb3}-\eqref{ezb3b}, it readily follows
\begin{align}\label{ez4}
&K_0|\Delta u|_2|\nabla u|_2+|\Delta u|_2\left(\int_{\O}a|\Delta u|^2\,dx\right)^{1\over2}+K_0|\Delta w|_2|\nabla w|_2+|\Delta w|_2\left(\int_{\O}a|\Delta w|^2\,dx\right)^{1\over2}\nag\\&\leq \frac{(\b-(N-4)\a) }{2}(d|\Delta u|_2^2+c|\Delta w|_2^2)+K_0(|\nabla u|_2^2+|\nabla w|_2^2)+K_0\int_{\O}a(|\Delta u|^2+|\Delta w|^2)\,dx
\\&\leq \frac{(\b-(N-4)\a) }{2}(d|\Delta u|_2^2+c|\Delta w|_2^2)+K_0|\lambda|^{-1}(||Z||_\cH^2+||Z||_\cH||U||_\cH)+K_0\int_{\O}a(|\Delta u|^2+|\Delta w|^2).\nag\end{align}
The combination of \eqref{ez3} and \eqref{ez4} leads to
\begin{align}\label{ezc3}
&|v|_2^2+|z|_2^2+ d|\Delta u|_2^2+c|\Delta w|_2^2\nag\\&\leq K_0\left(|\lambda|^{-{1\over2}}||Z||_{\cal H}||U||_{\cal H}+|\lambda|^{-1}||U||_{\cal H}^2\right)+K_0\int_{\O}a(|\Delta u|^2+|\Delta w|^2)\,dx
\nag\\&\hskip.4cm+
K_0|\lambda|^{-1}(||Z||_\cH^2+||Z||_\cH||U||_\cH)+K_0\int_{\O}a(|v|^2+|z|^2)\,dx\\&\hskip.4cm+K_0|\lambda|^{-1}||U||_{\cal H}^{3\over2}||Z||_{\cal H}^{1\over2}+
K_0||U||_{\cal H}^{1\over2}||Z||_{\cal H}^{1\over2}\left(\int_{\O}a(|D^2(u+w)|^2)\,dx\right)^{1\over2}\nag
\end{align}
It then follows from \eqre{ezc3}, (keeping in mind that $|\lambda|>1$):
\begin{align}\label{ez5}
||Z||_{\cal H}^2&\leq K_0\left(|\lambda|^{-{1\over2}}||Z||_{\cal H}||U||_{\cal H}+|\lambda|^{-1}||U||_{\cal H}^2\right)+K_0\int_{\O}a(d|\Delta u|^2+c|\Delta w|^2)\,dx
\nag\\&\hskip.4cm+
K_0\int_{\O}a(|v|^2+|z|^2)\,dx+K_0|\lambda|^{-1}||U||_{\cal H}^{3\over2}||Z||_{\cal H}^{1\over2}\\&\hskip.4cm+
K_0||U||_{\cal H}^{1\over2}||Z||_{\cal H}^{1\over2}\left(\int_{\O}a(|D^2(u+w)|^2)\,dx\right)^{1\over2}\nag
\end{align}
Invoking the localized energy estimates established in Step 2, (see \eqref{ahte3}), we obtain
\begin{align}\label{eza5}
||Z||_{\cal H}^2&\leq K_0\left(|\lambda|^{-{1\over2}}||Z||_{\cal H}||U||_{\cal H}+|\lambda|^{-1}||U||_{\cal H}^2\right)+K_0|\lambda|^{-1}||U||_{\cal H}^{3\over2}||Z||_{\cal H}^{1\over2}
\nag\\&\hskip.4cm+K_0|\lambda|^{-{2\over3}}\left(  ||Z||_{\cal H}^{4\over3} ||U||_{\cal H}^{2\over3}+|\lambda|^{-{2\over3}}||Z||_{\cal H}^{2\over3}||U||_{\cal H}^{4\over3}+|\lambda|^{-{4\over3}}||U||_{\cal H}^2\right)\\&\hskip.2in+K_0|\lambda|^{-\frac{1}{2}}||U||_{\cH}^{\frac{1}{2}}||Z||_{\cH}^\frac{3}{2}+
K_0||U||_{\cal H}^{1\over2}||Z||_{\cal H}^{1\over2}\left(\int_{\O}a(|D^2(u+w)|^2)\,dx\right)^{1\over2}\nag
\end{align}
~~\\
\let\v=\varepsilon
{\bf STEP 4: Ultimate estimate and completion of the proof.} In this step, we are going to estimate the integral term in \eqref{eza5}. We shall start with the term involving $D^2u$.\\ Applying Green's formula, we get, (using the Einstein summation convention on repeated indices):
\begin{align}\label{ez6}
\int_{\O}a|D^2(u+w)|^2\,dx&=\int_{\O}a\p_{k\ell}(u+w)\p_{k\ell}(\bar u+\bar w)\,dx\nag\\&=-\int_{\O}\p_\ell a\p_{k}(u+w)\p_{k\ell}(\bar u+\bar w)\,dx-\int_{\O}a\p_{k}(u+w)\p_{k}\Delta(\bar u+\bar w)\,dx\nag\\&=-\int_{\O}D^2(\bar u+\bar w)(\nabla a,\nabla (u+w))\,dx+\int_{\O}(\nabla a\cdot\nabla (u+w))\Delta(\bar u+\bar w)\,dx\nag\\&\hskip.4cm+\int_{\O}a|\Delta (u+w)|^2\,dx
\end{align}
Applying the Cauchy-Schwarz inequality and using \eqref{ezb3} as well as the fact that $|\nabla a|^2/a$ is bounded, it follows
\begin{align}\label{ez7}
\int_{\O}a|D^2(u+w)|^2\,dx&\leq K_0(|\Delta u|_2+|\Delta w|_2)\left(\int_{\O}a|\nabla (u+w)|^2\,dx\right)^{1\over2}+\int_{\O}a(|\Delta u|^2+|\Delta w|^2)\,dx\,dx\nag\\&
\leq K_0|\lambda|^{-1}\left(||U||_{\cH}^{\frac{1}{2}}||Z||_{\cH}^\frac{3}{2}+||Z||_{\cal H}||U||_{\cal H}\right)+\int_{\O}a(d|\Delta u|^2+c|\Delta w|^2)\,dx
\end{align}where we also used the identity: $|D^2u|_2=|\Delta u|_2$ as well as \eqref{dislaw1}.\\
Consequently,  \eqre{eza5} becomes
\begin{align}\label{ezb5}
||Z||_{\cal H}^2&\leq K_0\left(|\lambda|^{-{1\over2}}||Z||_{\cal H}||U||_{\cal H}+|\lambda|^{-1}||U||_{\cal H}^2\right)+K_0|\lambda|^{-1}||U||_{\cal H}^{3\over2}||Z||_{\cal H}^{1\over2}
\nag\\&\hskip.4cm+K_0|\lambda|^{-{2\over3}}\left(  ||Z||_{\cal H}^{4\over3} ||U||_{\cal H}^{2\over3}+|\lambda|^{-{2\over3}}||Z||_{\cal H}^{2\over3}||U||_{\cal H}^{4\over3}+|\lambda|^{-{4\over3}}||U||_{\cal H}^2\right)\\&\hskip.2in+K_0|\lambda|^{-1}||U||_{\cH}^{\frac{1}{2}}||Z||_{\cH}^\frac{3}{2}+K_0|\lambda|^{-\frac{1}{2}}||U||_{\cH}^{\frac{3}{4}}||Z||_{\cH}^\frac{5}{4}\nag\\&\hskip.4cm+
K_0||U||_{\cal H}^{1\over2}||Z||_{\cal H}^{1\over2}\left(\int_{\O}a(d|\Delta u|^2+c|\Delta w|^2)\,dx\right)^{1\over2}.\nag
\end{align}
Invoking \eqref{ahte3} once more, we get the following estimate
\begin{align}\label{ez55}
||Z||_{\cal H}^2&\leq K_0\left(|\lambda|^{-{1\over2}}||Z||_{\cal H}||U||_{\cal H}+|\lambda|^{-1}||U||_{\cal H}^2\right)+K_0|\lambda|^{-1}||U||_{\cal H}^{3\over2}||Z||_{\cal H}^{1\over2}
\nag\\&\hskip.4cm+K_0|\lambda|^{-{2\over3}}\left(  ||Z||_{\cal H}^{4\over3} ||U||_{\cal H}^{2\over3}+|\lambda|^{-{2\over3}}||Z||_{\cal H}^{2\over3}||U||_{\cal H}^{4\over3}\right)\nag\\&\hskip.2in+K_0|\lambda|^{-1}||U||_{\cH}^{\frac{1}{2}}||Z||_{\cH}^\frac{3}{2}+K_0|\lambda|^{-\frac{1}{2}}||U||_{\cH}^{\frac{3}{4}}||Z||_{\cH}^\frac{5}{4}\\&\hskip.4cm+
K_0||U||_{\cal H}^{1\over2}||Z||_{\cal H}^{1\over2}|\lambda|^{-{1\over3}}\left(  ||Z||_{\cal H}^{4\over3} ||U||_{\cal H}^{2\over3}+|\lambda|^{-{2\over3}}||Z||_{\cal H}^{2\over3}||U||_{\cal H}^{4\over3}\right)^\frac{1}{2}\nag\\&\hskip.4cm+
K_0||U||_{\cal H}^{1\over2}||Z||_{\cal H}^{1\over2}K_0\left(|\lambda|^{-\frac{1}{2}}||Z||_{\cH}^2+|\lambda|^{-1}||U||_{\cH}^{\frac{1}{2}}||Z||_{\cH}^\frac{3}{2}+|\lambda|^{-1}||Z||_{\cH}||U||_{\cH}\right)^\frac{1}{2}.\nag
\end{align}

One may rewrite this estimate as
\begin{align}\label{ez55r}
||Z||_{\cal H}^2&\leq K_0\left(||Z||_{\cal H}|||\lambda|^{-{1\over2}}U||_{\cal H}+|||\lambda|^{-{1\over2}}U||_{\cal H}^2\right)+K_0|||\lambda|^{-{2\over3}}U||_{\cal H}^{3\over2}||Z||_{\cal H}^{1\over2}
\nag\\&\hskip.4cm+K_0\left(  ||Z||_{\cal H}^{4\over3} |||\lambda|^{-1}U||_{\cal H}^{2\over3}+||Z||_{\cal H}^{2\over3}|||\lambda|^{-1}U||_{\cal H}^{4\over3}\right)\nag\\&\hskip.2in+K_0|||\lambda|^{-2}U||_{\cH}^{\frac{1}{2}}||Z||_{\cH}^\frac{3}{2}+K_0|||\lambda|^{-{2\over3}}U||_{\cH}^{\frac{3}{4}}||Z||_{\cH}^\frac{5}{4}\\&\hskip.4cm+
K_0\left(||Z||_{\cal H}^{7\over6} |||\lambda|^{-{2\over5}}U||_{\cal H}^{5\over6}+||Z||_{\cal H}^{5\over6}|||\lambda|^{-{4\over7}}U||_{\cal H}^{7\over6}\right)\nag\\&\hskip.4cm+
K_0\left( |||\lambda|^{-1}U||_{\cal H}^{1\over2}||Z||_{\cal H}^{3\over2}+|||\lambda|^{-{2\over3}}U||_{\cal H}^{3\over4}||Z||_{\cal H}^{5\over4}\right).\nag
\end{align}

Using Young inequality, one derives the desired estimate from \eqre{ez55r}, thereby completing the proof of Theorem \ref{reg}.\qed

\section{Proof of Theorem \ref{expstab}}\label{pexpstab}
This proof will rely on the following 
 semigroup exponential stability criterion:
 \begin{lem}\label{expdec}  {\bf (\cite[Theorem 3]{hf},  \cite[Corollary 4]{pr})} {\sl
Let ${\cal A}$ be the generator of a bounded $C_0$ semigroup
$(S(t))_{t\geq0}$ on a Hilbert space ${\cal H}$. Then
$(S(t))_{t\geq0}$ is exponentially stable if and only if:\par {\tt
i)} $i{\bb R}\subset\rho({\cal A})$, and\par  {\tt ii)}
$\sup\{||(i\lambda-{\cal A})^{-1}||_{{\cal L}({\cal H})};~\lambda\in {\bb R}\}<\infty$, where
$\rho({\cal A})$ denotes the resolvent set of} ${\cal A}$.\end{lem}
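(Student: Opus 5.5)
The statement in Lemma \ref{expdec} is the classical Gearhart--Pr\"uss--Huang theorem. The plan is to prove necessity by a Laplace transform, and sufficiency by Plancherel's theorem in the Hilbert space ${\cal H}$ combined with Datko's theorem. Throughout, let $M=\sup_{t\geq0}\|S(t)\|$ and $R(\lambda)=(\lambda-{\cal A})^{-1}$.

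\emph{Necessity.} Assume $\|S(t)\|\leq Ce^{-\mu t}$. The Laplace integral $\int_0^\infty e^{-\lambda t}S(t)x\,dt$ converges absolutely for $\Re\lambda>-\mu$, and it equals $R(\lambda)x$ there. So $i{\mathbb R}\subset\rho({\cal A})$ and $\|R(i\lambda)\|\leq C/\mu$ for every real $\lambda$, which gives i) and ii).

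\emph{Sufficiency, step 1: a uniform resolvent bound on the open right half-plane.} Let $K=\sup_{s\in{\mathbb R}}\|R(is)\|$. For $0<\varepsilon\leq 1/(2K)$ the Neumann series gives
$$R(\varepsilon+is)=R(is)\bigl(I+\varepsilon R(is)\bigr)^{-1},$$
so $\|R(\varepsilon+is)\|\leq 2K$. For $\varepsilon\geq 1/(2K)$, the Hille--Yosida bound gives $\|R(\varepsilon+is)\|\leq M/\varepsilon\leq 2KM$. Hence $L:=\sup_{\Re\lambda>0}\|R(\lambda)\|<\infty$.

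\emph{Sufficiency, step 2: a uniform $L^2$ bound on orbits.} Fix $x\in{\cal H}$ and $0<\varepsilon<1$.
\begin{itemize}
\item By vector-valued Plancherel applied to $t\mapsto e^{-\varepsilon t}S(t)x\,1_{t>0}$,
$$\int_0^\infty e^{-2\varepsilon t}\|S(t)x\|^2\,dt=\frac{1}{2\pi}\int_{\mathbb R}\|R(\varepsilon+is)x\|^2\,ds .$$
\item The resolvent identity gives
$$R(\varepsilon+is)x=R(1+is)x+(1-\varepsilon)R(\varepsilon+is)R(1+is)x,$$
so $\|R(\varepsilon+is)x\|\leq (1+L)\|R(1+is)x\|$.
\item Plancherel again, now at abscissa $1$, gives
$$\int_{\mathbb R}\|R(1+is)x\|^2\,ds=2\pi\int_0^\infty e^{-2t}\|S(t)x\|^2\,dt\leq \pi M^2\|x\|^2 .$$
\end{itemize}
Combining these, $\int_0^\infty e^{-2\varepsilon t}\|S(t)x\|^2dt\leq (1+L)^2M^2\|x\|^2/2$ uniformly in $\varepsilon$. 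Letting $\varepsilon\to0$ by monotone convergence yields $\int_0^\infty\|S(t)x\|^2dt\leq C\|x\|^2$ for all $x$.

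\emph{Sufficiency, step 3: Datko's argument.} From
$$t\|S(t)x\|^2=\int_0^t\|S(t-s)S(s)x\|^2ds\leq M^2\int_0^t\|S(s)x\|^2ds\leq M^2C\|x\|^2,$$
we get $\|S(t_0)\|<1$ for some large $t_0$. The semigroup property then gives exponential decay.

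The main obstacle is step 2. The resolvent bound alone only controls the Laplace transform on lines $\Re\lambda=\varepsilon$, and what is needed is an $\varepsilon$-independent square-integrability bound. The key point is that the resolvent identity transfers the $L^2(ds)$-integrability from the line $\Re\lambda=1$, where it holds trivially, to lines arbitrarily close to the imaginary axis. This transfer uses only the uniform bound $L$ from step 1. The Hilbert space structure is essential here, since Plancherel fails in general Banach spaces.
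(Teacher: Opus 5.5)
Your proof is correct. The Neumann-series bound $\|R(\varepsilon+is)\|\le 2K$, the resolvent identity linking the lines $\Re\lambda=\varepsilon$ and $\Re\lambda=1$, the Plancherel normalization, and Datko's argument all check out.

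The paper gives no proof of Lemma~\ref{expdec}. It imports the lemma from Huang and Pr\"uss and uses it as a black box in Section~\ref{pexpstab}. So your write-up is a self-contained replacement for a citation rather than a variant of an argument in the text.

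Two features of your route are worth keeping in mind:
\begin{itemize}
\item Your Step~1 isolates exactly where boundedness of the semigroup enters. The general Hilbert-space criterion requires a uniform resolvent bound on the whole open right half-plane. It is the Hille--Yosida estimate $\|R(\lambda)\|\le M/\Re\lambda$ that lets you reduce this to the imaginary axis. Without boundedness the reduction fails: the generator $1$ on $\mathbb{C}$ has $|(is-1)^{-1}|\le 1$ on $i\mathbb{R}$, yet $e^{t}$ grows. This is also why the paper's use of the lemma relies on the semigroup being contractive, together with the spectral information from Section~\ref{wellposed}.
\item Your argument is quantitative. It gives
\begin{equation*}
\int_0^\infty\|S(t)x\|^2\,dt\le\tfrac12(1+2KM)^2M^2\|x\|^2,
\end{equation*}
and hence explicit constants $C_0,\mu$ in terms of $M$ and $K=\sup_s\|R(is)\|$.
\end{itemize}

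The route associated with the Pr\"uss reference works differently. It characterizes when $1\in\rho(S(t_0))$ through the resolvent at the points $2\pi i k/t_0$, using Fourier series on $[0,t_0]$, and then concludes with the spectral radius formula. That yields finer spectral information about the individual operators $S(t_0)$. For the lemma exactly as stated and used here, your Plancherel--Datko argument is the more direct one.
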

The resolvent set condition was established in Section 2. Therefore, it remains to prove the resolvent estimate. Thus, the proof will be completed as soon as we establish the following resolvent 
estimate \begin{equation}\label{res2}\exists K_0>0:\|(i\lambda{\cal I}-{\cal A})^{-1}U\|_{{\cal L}({\cal
H})}\leq K_0||U||_\cH,\,\forall U\in \cH,\,\forall\lambda\in{\mathbb R}.\end{equation}
To prove \eqref{res2}, it is enough to prove it for $|\lambda|\geq\lambda_0$ for some $\lambda_0>0$, since we can invoke the continuity of the resolvent on $|i\lambda|\leq\lambda_0$ to complete the proof.\\  For this purpose, let $U\in {\cal H}$, and let $\lambda$ be a real number with $|\lambda|>1$.
Since the range of $i\lambda{\cal I}-{\cal A}$ is ${\cal H}$, there
exists $Z\in{ D}({\cal A})$ such that
\begin{equation}\label{yae0}i\lambda Z-{\cal A} Z=U.\end{equation} The desired resolvent estimate will be established once we prove
\begin{equation}\label{yae1}\|Z\|_{\cal H}\leq K_0\,\|U\|_{\cal H},\end{equation}
The proof of this resolvent estimate is simpler than the one established in the preceding section; indeed, we will borrow elements from Step 3 of the proof in that section that do not require any smoothness of the damping coefficient $a$. Thus, we already have the following estimate:
\begin{align}\label{yae2}
&|v|_2^2+|z|_2^2+ d|\Delta u|_2^2+c|\Delta w|_2^2\nag\\&\leq K_0\sum_{j=1}^J\int_{\Omega_j} |g(\varphi_jm^j\cdot\nabla\bar u|+|\ell(\varphi_jm^j\cdot\nabla\bar w)|\,dx+K_0\int_{\omega_1}(|\Delta u|^2+|\Delta w|^2)\,dx
\nag\\&\hskip.4cm+
K_0\sum_{j=1}^J\int_{\Omega_j}|v\varphi_jm^j\cdot\nabla\bar f|+|z\varphi_jm^j\cdot\nabla\bar h|\,dx\\&\hskip.4cm+
K_0|\lambda|^{-1}(||Z||_\cH^2+||Z||_\cH||U||_\cH)+K_0\int_{\omega_1}(|v|^2+|z|^2)\,dx\nag\\&\hskip.4cm+K_0|\lambda|^{-1}||U||_{\cal H}^{3\over2}||Z||_{\cal H}^{1\over2}+
K_0||U||_{\cal H}^{1\over2}||Z||_{\cal H}^{1\over2}\left(\int_{\O}a(|D^2(u+w)|^2)\,dx\right)^{1\over2}.\nag
\end{align}
We would like to draw the reader's attention of the fact that
\begin{itemize}
    \item the terms involving the right hand sides in \eqref{xae2} are not fully estimated yet; estimating those terms is much simpler in the present context;
    \item we kept the localized terms on $\omega_1$; to estimate the localized kinetic and potential energies, we are going to use a suitable smooth cut-off function since the damping coefficient $a$ is no longer smooth.
\end{itemize}
Now, we are going to estimate the integral terms in the last inequality. We shall start with the last integral
\begin{align}\label{yae3}
\int_{\O}a(|D^2(u+w)|^2)\,dx\leq K_0(|\Delta u|_2^2+|\Delta w|_2^2)\leq K_0||Z||_\cH^2.
\end{align}
Applying Cauchy-Schwarz and Sobolev embeddings, we get
\begin{align}\label{yae4}
\sum_{j=1}^J\int_{\Omega_j} |g(\varphi_jm^j\cdot\nabla\bar u|+|\ell(\varphi_jm^j\cdot\nabla\bar w)|\,dx&\leq K_0(|g|_2|\nabla u|_2+|\ell|_2|\nabla w|_2)\nag\\&\leq K_0||U||_\cH||Z||_\cH,
\end{align}
and 
\begin{align}\label{yae5}
\sum_{j=1}^J\int_{\Omega_j}|v\varphi_jm^j\cdot\nabla\bar f|+|z\varphi_jm^j\cdot\nabla\bar h|\,dx
&\leq K_0(|v|_2|\nabla f|_2+|z|_2|\nabla h|_2)\nag\\&\leq K_0||Z||_\cH||U||_\cH.
\end{align}
Collecting those estimates, the inequality \eqref{yae2} becomes, (keeping in mind that $|\lambda|>1$):
\begin{align}\label{yae6}
||Z||_\cH^2&\leq K_0 ||Z||_\cH||U||_\cH+K_0\int_{\omega_1}(|\Delta u|^2+|\Delta w|^2)\,dx
\nag\\&\hskip.4cm+
K_0\int_{\omega_1}(|v|^2+|z|^2)\,dx\\&\hskip.4cm+K_0||U||_{\cal H}^{3\over2}||Z||_{\cal H}^{1\over2}+
K_0||U||_{\cal H}^{1\over2}||Z||_{\cal H}^{3\over2},\nag
\end{align}where the term involving $||Z||_\cH^2$ is absorbed to the left by choosing $|\lambda|$ large enough.\\
Using Young inequality, we readily derive from \eqref{yae6}:
\begin{align}\label{yae7}
||Z||_\cH^2\leq K_0 ||U||_\cH^2+K_0\int_{\omega_1}(|\Delta u|^2+|\Delta w|^2)\,dx
+
K_0\int_{\omega_1}(|v|^2+|z|^2)\,dx.
\end{align}
Now, we are going to introduce a cut-off function that will enable us to estimate the localized kinetic energy. Once this is done, we shall estimate the localized potential energy. To this end, let $\eta$ in $C^2(\bar\Omega)$ with
$$0\leq\eta\leq a_0\text{ in }\Omega,\quad \eta=a_0 \text{ in }\omega_1 \text{ and }\eta=0\text{ in }\Omega\setminus\omega,$$where $a_0$ is given by \eqref{coerc}. notice that as defined, the function $\eta$ satisfies: $\eta\leq a$ almost everywhere in $\Omega$.\\
We shall estimate the localized energy; the proof is divided into two steps. In the first step we shall estimate the localized kinetic energy, then in the second step, we shall estimate the localized potential energy and complete the proof.\\

\noindent
{\bf Step 1: Estimating the localized kinetic energy.} The main idea is the same as in the proof
in the last section. However, given that the function $a$ is no longer smooth, we cannot use it as we did in \eqref{ez81a}.\\
Using the cut-off function $\eta$, it follows
\begin{align}\label{yae8}
\int_{\omega_1}(|v|^2+|z|^2)\,dx\leq\frac{1}{a_0}\int_{\Omega}\eta^2(|v|^2+|z|^2)\,dx=\frac{1}{a_0}\int_{\Omega}\eta^2|v+z|^2\,dx-\frac{2}{a_0}\Re\int_{\Omega}\eta^2v\bar z\,dx.
\end{align}
Using interpolation as we did in Step 1 of the last section, we get
\begin{align}\label{yae9}
|\eta(v+z)|_2&\leq K_0||\eta(v+z)||_{-2}^{1\over3}||\eta(v+z)||_{1}^{2\over3}\nag\\&
\leq K_0(||v||_{-2}+||z||_{-2})^{1\over3}(||Z||_\cH+||U||_{\cal H}^{1\over2}||Z||_{\cal H}^{1\over2})^{2\over3},
\end{align}by following \eqref{ez9} and using the first inequality in \eqref{ey2}.\\
To estimate the $H^{-2}$-norm now, we note that a rough estimate of the term involving the damping is enough. Therefore using the second and last equations in \eqref{xae2}, we derive
\begin{align}\label{yae10}
|\lambda|||v||_{-2}&\leq K_0(|\Delta u|_2+||\text{div}(a\nabla(v+z)||_{-1}+|g|_2)\nag\\&\leq K_0(||Z||_\cH+||U||_{\cal H}^{1\over2}||Z||_{\cal H}^{1\over2}+||U||_\cH)
\end{align}
and 
\begin{align}\label{yae11}
|\lambda|||z||_{-2}&\leq K_0(|\Delta w|_2+||\text{div}(a\nabla(v+z)||_{-1}+|\ell|_2)\nag\\&\leq K_0(||Z||_\cH+||U||_{\cal H}^{1\over2}||Z||_{\cal H}^{1\over2}+||U||_\cH)
\end{align}
Gathering \eqref{yae9} to \eqref{yae11}, it follows
\begin{align}\label{yae12}
|\eta(v+z)|_2
&\leq K_0|\lambda|^{-{1\over3}}(||Z||_\cH+||U||_{\cal H}^{1\over2}||Z||_{\cal H}^{1\over2}+||U||_\cH)^{1\over3}(||Z||_\cH+||U||_{\cal H}^{1\over2}||Z||_{\cal H}^{1\over2})^{2\over3}\nag\\&\leq K_0(|\lambda|^{-{1\over3}}||Z||_\cH+||U||_{\cal H}^{1\over2}||Z||_{\cal H}^{1\over2}+||U||_{\cal H}^{1\over3}||Z||_{\cal H}^{2\over3}+||U||_{\cal H}^{2\over3}||Z||_{\cal H}^{1\over3}).
\end{align}
Combining \eqref{yae8} and \eqref{yae12}, we derive
\begin{align}\label{yae13}
\int_{\Omega}\eta^2(|v|^2+|z|^2)\,dx&\leq K_0(|\lambda|^{-{2\over3}}||Z||_\cH^2+||U||_{\cal H}||Z||_{\cal H}+||U||_{\cal H}^{2\over3}||Z||_{\cal H}^{4\over3}+||U||_{\cal H}^{4\over3}||Z||_{\cal H}^{2\over3})\nag\\&\hskip.4cm-\frac{2}{a_0}\Re\int_{\Omega}\eta^2v\bar z\,dx.
\end{align}
To complete this step, it remains to estimate the integral term in \eqref{yae13}.\\
Proceeding as in Step 1 of last section, replacing $a$ by $\eta^2$, we find
\begin{align}\label{yae14}
\Re\int_\Omega \eta^2v\bar z\,dx&=\frac{1}{c-d}\Bigg[\Re\frac{1}{i\lambda}\int_\Omega \eta^2\Big[cd(\Delta u\Delta\bar h-\Delta \bar w\Delta f) +cg\bar z+d\ell v\Big]\,dx\nag\\&\hskip.2in
-\Re\frac{cd}{i\lambda}\int_\Omega\Delta\bar w(2\nabla (\eta^2)\cdot \nabla v+v\Delta (\eta^2))+\Delta u(2\nabla (\eta^2)\cdot \nabla \bar z+\bar z\Delta (\eta^2))\,dx\\&\hskip.2in- \Re\frac{1}{i\lambda}\int_\Omega a\Big[c\nabla(v+z)\cdot(\eta^2\nabla\bar z+\bar z\nabla (\eta^2))+d\nabla(\bar v+\bar z)\cdot(\eta^2\nabla v+v\nabla (\eta^2))\Big]\,dx.\nag
\Bigg]\end{align}
Notice that the three terms in the right side of \eqref{yae14} can be estimated as in Step 1 of the last section, yielding

\begin{align}\label{yae15}
&\left|\Re\int_\Omega \eta^2v\bar z\,dx\right|\leq  K_0(|\lambda|^{-\frac{1}{2}}||Z||_{\cH}^2+||U||_{\cH}^{\frac{1}{2}}||Z||_{\cH}^\frac{3}{2}+||Z||_{\cH}||U||_{\cH}),\end{align}
where we have kept the power of $|\lambda|$ only where it is necessary to be kept.\\ 
Collecting \eqref{yae13} and \eqref{yae15} leads to
\begin{align}\label{yae16}
\int_{\Omega}\eta^2(|v|^2+|z|^2)\,dx&\leq K_0(||U||_{\cal H}||Z||_{\cal H}+||U||_{\cal H}^{2\over3}||Z||_{\cal H}^{4\over3}+||U||_{\cal H}^{4\over3}||Z||_{\cal H}^{2\over3})\nag\\&\hskip.4cm+K_0(|\lambda|^{-\frac{1}{2}}||Z||_{\cH}^2+||U||_{\cH}^{\frac{1}{2}}||Z||_{\cH}^\frac{3}{2}).
\end{align}
First, combining \eqref{yae7} and \eqref{yae16}, then using Young inequality and the definition of the cut-off function $\eta$ yield
\begin{align}\label{yae17}
||Z||_\cH^2\leq K_0 ||U||_\cH^2+K_0\int_{\omega_1}(|\Delta u|^2+|\Delta w|^2)\,dx\leq K_0 ||U||_\cH^2+K_0\int_{\Omega}\eta^2(|\Delta u|^2+|\Delta w|^2)\,dx.
\end{align}

\noindent
{\bf Step 2: Estimating the localized potential energy.} Multiply the second equation of \eqref{xae2} by $\eta^2\bar u$ and its last equation by $\eta^2\bar w$, then integrate over $\Omega$ and apply Green's formula to derive
\begin{align}\label{yae18}
\int_\Omega \eta^2(d|\Delta u|^2+c|\Delta w|^2)\,dx&=\int_\Omega \eta^2 (|v|^2+|z|^2)\,dx-d\Re\int_\Omega\Delta u(2\nabla (\eta^2)\cdot \nabla\bar u+\bar u\Delta(\eta^2))\,dx\nag\\&\hskip.2in -\Re\int_\Omega a\nabla(v+z)\cdot\nabla(\bar u\eta^2+\bar w\eta^2)\,dx+\Re\int_\Omega \eta^2 (g\bar u+\ell\bar w)\,dx\\&\hskip.4cm
-c\Re\int_\Omega\Delta w(2\nabla (\eta^2)\cdot \nabla\bar w+\bar w\Delta(\eta^2))\,dx\nag\\&\hskip.4cm+\Re\int_\Omega \eta^2 (v\bar f+z\bar h)\,dx\nag
\end{align}
The application of Cauchy-Schwarz and Poincar\'e inequalities as well as an elementary interpolation inequality lead to
\begin{align}\label{yae19}
&\left| d\Re\int_\Omega\Delta u(2\nabla (\eta^2)\cdot \nabla\bar u+\bar u\Delta(\eta^2))\,dx\right|+\left|c\Re\int_\Omega\Delta w(2\nabla (\eta^2)\cdot \nabla\bar w+\bar w\Delta(\eta^2))\,dx\right|\nag\\&\leq K_0(|\Delta u|_2|\nabla u|_2+|\Delta w|_2|\nabla w|_2)\leq K_0||Z||_\cH(|u|_2^{1\over2}|\Delta u|_2^{1\over2}+|w|_2^{1\over2}|\Delta w|_2^{1\over2})\\&\leq K_0(|\lambda|^{-{1\over2}}||Z||_\cH^2+||U||_{\cH}^{\frac{1}{2}}||Z||_{\cH}^\frac{3}{2}),\nag
\end{align}where we have used the first and third equation in \eqref{xae2} to obtain the last inequality.\\
Similarly, and thanks to \eqref{dislaw}, one derives
\begin{align}\label{yae20}
\left|\Re\int_\Omega a\nabla(v+z)\cdot\nabla(\bar u\eta^2+\bar w\eta^2)\,dx\right|&\leq K_0|\sqrt{a}\nabla(v+z)|_2(|\Delta u|_2+|\Delta w|_2)\nag\\&\leq K_0||U||_{\cH}^{\frac{1}{2}}||Z||_{\cH}^\frac{3}{2}.
\end{align}
Applying Cauchy-Schwarz and if needed Poincar\'e inequality, we find
\begin{align}\label{yae21}
\left|\Re\int_\Omega \eta^2 (g\bar u+\ell\bar w)\,dx\right|+\left|\Re\int_\Omega \eta^2 (v\bar f+z\bar h)\,dx\right|\leq K_0||U||_\cH||Z||_\cH.
\end{align}
The combination of \eqref{yae18} to \eqref{yae21} yields
\begin{align}\label{yae22}
&\int_\Omega \eta^2(d|\Delta u|^2+c|\Delta w|^2)\,dx\leq\int_\Omega \eta^2 (|v|^2+|z|^2)\,dx\nag\\&\hskip.4cm+K_0(|\lambda|^{-{1\over2}}||Z||_\cH^2+||U||_\cH||Z||_\cH+||U||_{\cH}^{\frac{1}{2}}||Z||_{\cH}^\frac{3}{2})\\&\leq
K_0(||U||_{\cal H}||Z||_{\cal H}+||U||_{\cal H}^{2\over3}||Z||_{\cal H}^{4\over3}+||U||_{\cal H}^{4\over3}||Z||_{\cal H}^{2\over3})+K_0(|\lambda|^{-{1\over2}}||Z||_\cH^2+||U||_{\cH}^{\frac{1}{2}}||Z||_{\cH}^\frac{3}{2}),\nag
\end{align}where we have used \eqref{yae16} to get the last inequality.\\
Collecting \eqref{yae17} and \eqref{yae22}, then applying Young inequality, it follows
\begin{align}\label{yae17bis}
||Z||_\cH^2\leq K_0 ||U||_\cH^2,
\end{align}which completes the proof of Theorem \ref{expstab}.\qed

\end{document}